\newcommand{\cpo}{\mathbb{C}_p[[z]]}
\newcommand{\coneo}{\mathbb{C}_1[[z]]}
\newcommand{\cpinfty}{\mathbb{C}_p[[1/z]]}
\newcommand{\coneinfty}{\mathbb{C}_1[[1/z]]}
\newcommand{\cpounion}{\mathbb{C}_{\geq p}[[z]]}
\newcommand{\coneounion}{\mathbb{C}_{\geq 1}[[z]]}
\newcommand{\cpinftyunion}{\mathbb{C}_{\leq p}[[1/z]]}
\newcommand{\coneinftyunion}{\mathbb{C}_{\leq 1}[[1/z]]}
\newcommand{\cno}{\mathbb{C}_n[[z]]}
\newcommand{\cninfty}{\mathbb{C}_n[[1/z]]}
\theoremstyle{plain}
        \newtheorem{theorem}{Theorem}[section]
        \newtheorem{lemma}[theorem]{Lemma}
        \newtheorem{proposition}[theorem]{Proposition}
        \newtheorem{corollary}[theorem]{Corollary}
\theoremstyle{definition}
        \newtheorem{definition}[theorem]{Definition}
        \newtheorem{example}{Example}[section]
\theoremstyle{remark}
    \newtheorem{remark}[theorem]{Remark}
    \newtheorem{question}{Question}[section]
\numberwithin{equation}{section} 
\numberwithin{figure}{section} 
\author{Eric Schippers}
\title{A power matrix approach to the Witt algebra and Loewner equations}
\begin{document}
\maketitle

\begin{abstract} The theory of formal power series and derivation is developed
 from the point of view of the power matrix.  A Loewner equation for formal power series
 is introduced.  We then show that the matrix exponential is surjective onto the group
 of power matrices, and the coefficients are entire functions of finitely many coefficients
 of the infinitesimal generator.  Furthermore coefficients of the solution to the Loewner equation
 with constant infinitesimal generator can be obtained by exponentiating an infinitesimal power matrix.
 We also use the formal Loewner equations to investigate the relation between holomorphicity of an infinitesimal
 generator to holomorphicity of the exponentiated matrix.
\end{abstract}
\begin{section}{Introduction}
\begin{subsection}{Background and context}
 In conformal field theory, the Witt and Virasoro algebra are modelled using
 formal power series and derivations.  The purpose of this paper is
to outline an alternate approach to formal power series and
derivations which uses the power matrix in conjunction with the
Loewner equation.  We introduce here a notion of ``formal Loewner
equations'', which are Loewner equations for formal power series.
Even without the assumption of analyticity, univalence, or
subordination there are a formal Loewner partial and ordinary
differential equation which have the same form as the standard
Loewner equations.  Together with the power matrix, this provides
a convenient approach to the exponential map between derivations
and formal power series.

The connection of the Virasoro algebra (the central extension to
the Witt algebra) to Loewner theory was investigated by Markina,
Prokhorov and Vasil'ev \cite{MarkinaProkhorovVasilev}
\cite{MarkinaVasilev}. With a power matrix approach to the
semigroup of locally analytic and univalent maps near zero, the
Witt algebra has a matrix representation, as was observed by the
author in \cite{Ericpowermatrix}.  This provides a convenient way
to deal with the ``Lie theory'' of formal power series, and hence
also of the semigroup of locally univalent maps.  This point of
view is thoroughly developed in this paper, especially the
properties of the matrix exponential.

Results for formal power series and derivations, which can be
found in Huang \cite{Huang}, have consequences for the power
matrix and formal Loewner equations which are not known in
function theory. Although it would be possible to derive some of
the theorems for the power matrix directly from results in
\cite{Huang}, we give proofs here entirely in terms of the power
matrix and formal Loewner equations.  This has the advantages that
the paper is self-contained, and that it makes the results and
methods accessible to function theorists.  Furthermore, the
connection to Loewner theory suggests an approach to the problem
of whether the exponential map and its inverse preserve
holomorphicity, which we discuss in the last section.

 The power matrix is a well-known
 combinatorical object \cite{Comtet} which has been extensively studied.
 It is a useful tool appearing in several
 important constructions in geometric function theory, such as Faber polynomials
 and Grunsky matrices  \cite{Davis} \cite{Jabotinsky} \cite{SchifferTammi}.  The power matrix reveals
 non-trivial algebraic structure in Loewner theory \cite{SchifferTammi}.  The author
 gave a simple algebraic form for the Loewner-Schiffer differential system of a quadratic
 differential \cite{Ericpowermatrix}.  In the present setting it provides
 a convenient matrix representation of the Witt algebra.  In general, if a class
 of analytic functions can be represented as composition operators on a Hilbert
 or Banach space then the power matrix is the expression for this operator in a
 universally applicable basis $\{z^n\}$.  Thus the power matrix formalism should
 be of use in the representation theory of the Witt and Virasoro
 algebras on Hilbert spaces.
\end{subsection}
\begin{subsection}{Results and Outline}
 We will now describe the results.
 A concise list is given below.   In two-dimensional conformal field theory,
 the two ``halves'' of the Witt
 algebra are modelled as infinitesimal generators of locally
 analytic maps at $0$ and $\infty$, with a simple zero or pole respectively.
 A kind of Lie
 theory exists, where formal power series at $0$ and $\infty$
 stand in for the conformal maps, and derivations of power series
 correspond to the Lie algebra.  Derivations of power series at
 $0$ are one half of the Witt algebra, with basis
 \[  \left\{ z^n \frac{\partial}{\partial z} \,:\, n\geq 1\right\}. \]
 and derivations of power series at $\infty$ are the other half,
 with basis
 \[  \left\{ z^n \frac{\partial}{\partial z} \,:\, n \leq 1
 \right\}.  \]
 The development of this point of view can be found in
 \cite{Huang}.

 Our approach to the Witt algebra is to use the ``power matrix'' representation of power
 series.  The power matrix is defined as follows, in the case of locally univalent maps fixing
 $0$. Given a power series
 $f(z)=f_1 z + f_2 z^2 + \cdots$ the power
 matrix is the matrix of the linear transformation of power series
 given by $g \mapsto g \circ f$, represented in the basis
 $z^n$, $n \geq 1$.  Matrices of this form are denoted by $[f]$.
  In this representation, composition of power series
 corresponds to matrix multiplication.
 There is a
 Lie algebra of infinitesimal generators, corresponding to holomorphic functions $h$ which
 vanish at $0$, whose elements will be denoted by $\left<h\right>$. The ``Lie
 theory'' of the exponential map, Lie brackets, trivialization of
 the tangent bundle etc., takes a simple and convenient form.

As mentioned above, the theory of formal power series and derivations
has interesting consequences for function theory.
For example, we show that
 \begin{enumerate}
  \item The power matrix of any power series (in particular, a holomorphic
  power series) is the
  matrix exponential of $\left<h\right>$ for some formal power
  series $h$.
  \item If $\left<h\right>$ is the infinitesimal power matrix of a
  holomorphic function, then $\exp{t \left<h\right>}$ is the power
  matrix of a holomorphic function for some interval $[0,T]$.
  \item The coefficients of the
  solutions to the Loewner partial and ordinary differential
  equation, with constant infinitesimal generator $p$ and initial
  condition $f_0$, are given by
  $[f_0]\cdot\exp{t\left<zp\right>}$ or $\exp{t\left<zp\right>}\cdot[f_0]$
  respectively.
  \item Any finite set of coefficients of the power series of a solution to the Loewner partial or ordinary differential equation with constant infinitesimal generator can be uniformly approximated on a finite interval $[0,T]$ by the first row of a matrix polynomial.
  \item  For any formal power series $h$, each entry of
  $\exp{\left<h\right>}$ is an entire function of finitely many
  coefficients of $h$.
 \end{enumerate}
 The first two results are consequences of known results for formal power series and derivations.  The first
 result could be seen as a consequence of \cite[Proposition 2.1.7]{Huang}.  The second result is a consequence of a result of
  Huang \cite{Huang_private} communicated verbally to the author.  We give another
  proof using the power matrix and the formal
  Loewner equations.  Huang
  posed the question of whether
the exponential map (in the formal power series setting) preserves holomorphicity, in the sense that the exponential of a holomorphic power series must itself be holomorphic, and conversely whether if the exponential of a power series is
holomorphic, the power series must be holomorphic.  These questions should be of interest to function theorists, especially when
rephrased in terms of Loewner theory and the power matrix.  This is explored in the last section.

Here is an outline of the paper.  In Sections 2.1--2.3 we define the group of power matrices at $0$ and $\infty$ and their Lie algebras, derive their basic properties,
and outline the equivalence with the derivations and formal power series picture.  In Section 2.4, we introduce the formal Loewner equations for power series and power matrices.  Section 3 is devoted to establishing the properties of the matrix exponential.  In particular it is shown that the matrix exponential is surjective and nearly injective, and that its coefficients are entire functions of the generator.

 Section 4 is devoted to applications.  Section 4.1 shows how it is possible to compute or approximate the
solution to the Loewner partial or ordinary differential equations
through the use of matrix operations.  In 4.2 and 4.3 we
investigate the problem of whether the exponential and its inverse
preserve holomorphicity, by demonstrating its relation to
solutions of the Loewner differential equations.

 I am grateful to Yi-Zhi Huang and David Radnell for valuable discussions on the subject of this paper.
\end{subsection}
\end{section}
\begin{section}{The power matrix representation of local
coordinates}
\begin{subsection}{Spaces of formal power series}
 We will be working with formal power series at $0$ and $\infty$.
 First, we define the general spaces over $\mathbb{C}$
 \begin{equation} \label{de:cpo}
  \cpo=\left\{ \sum_{n=p}^\infty a_n z^n \,:\, a_n \in \mathbb{C}, \ a_p
  \neq 0 \right\}
 \end{equation}
 and
 \begin{equation} \label{de:cpinfty}
  \cpinfty=\left\{ \sum_{n=-\infty}^p a_n z^n \,:\, a_n \in \mathbb{C}, \ a_p
  \neq 0 \right\}.
 \end{equation}
 Of course these are not in general closed under addition or
 multiplication.  However
 \begin{equation} \label{de:cpounion}
  \cpounion= \cup_{n=p}^\infty \cno
 \end{equation}
 and
 \begin{equation} \label{de:cpinftyunion}
  \cpinftyunion = \cup_{n=p}^\infty \cninfty
 \end{equation}
 are vector spaces.

 The main sets of interest are $\coneo$ and $\coneinfty$.  These are
 groups under composition. In some sense the Lie algebra of $\coneo$ is
 the vector space of derivations
 \begin{equation} \label{de:derivationo}
  \mathfrak{d}(0) = \left\{ h(z) \frac{\partial}{\partial z} \,:\,
   h \in \coneounion \right\}
 \end{equation}
 \and the Lie algebra of $\coneinfty$ is
 \begin{equation} \label{de:derivationinfty}
  \mathfrak{d}(\infty) = \left\{ h(z) \frac{\partial}{\partial z} \,:\,
   h \in \coneinftyunion \right\}
 \end{equation}
 The bracket operation in both cases is
 \begin{equation} \label{de:bracketofderivations}
  \left[ h_1(z) \frac{\partial}{\partial z}, h_2(z)
  \frac{\partial}{\partial z} \right] = \left( h_1(z) h_2'(z) -
  h_1'(z) h_2(z) \right) \frac{\partial}{\partial z}.
 \end{equation}

 Clearly composition by $\coneo$ on the right is a group action on
 $\cpo$ and $\cpounion$ for all $p$.  Similarly composition by
 $\coneinfty$ on the right is a group action on $\cpinfty$ and
 $\cpinftyunion$ for all $p$.  In general, composition of elements of $\mathbb{C}_p[[z]]$ may involve
 infinite sums.  In all cases considered in this paper (for example, composition on the right by
 elements of $\mathbb{C}_1[[z]]$) only finite sums are involved and the definition of composition is
 unambiguous.
\end{subsection}
\begin{subsection}{The power matrices}
 \begin{definition}[power matrix at $0$] \label{de:powermatrixatoMP}  Let $f \in \cpo$. The power matrix $[f]$ of $f$ is defined to be the matrix whose entry in the $n$th row and $k$th column satisfies
 \[  f(z)^n = \sum_{k=pn}^\infty [f]^n_k z^k.  \]
 We denote
 \[  \mathcal{M}_p(0)=\{[f]\,:\, f\in \cpo\}  \]
 and
 \[  \mathcal{M}(0) = \mathcal{M}_1(0).  \]
 \end{definition}
 Note that the leading entry of each successive row of $[f]$ appears $p$ places to the right of the leading entry of the row immediately above.  (If $p<0$, it appears $|p|$ places to the left).
 Elements of $\mathcal{M}(0)$ are upper triangular.

  Functions which are meromorphic in a neighbourhood of infinity also have a power matrix representation.
 \begin{definition}[power matrix at $\infty$] The power matrix $[g]$ of an element $g \in \cpinfty$ is the matrix whose entries $[g]^n_k$ satisfy
 \[  g(z)^n=\sum_{k=-\infty}^{pn} [g]^n_k z^k.  \]
 Denote
 \[  \mathcal{M}_p(\infty) = \{ [g]\,:\, g \in \cpinfty \}  \]
 and
 \[  \mathcal{M}(\infty)= \mathcal{M}_{1}(\infty).  \]
 \end{definition}

 For $g \in \mathcal{M}_p(\infty)$, the final non-zero entry of the $n$th row is in the $pn$th column.  In each row, the final non-zero entry is $p$ places to the right of the final non-zero entry of the row immediately above (if $p<0$ then it is $|p|$ places to the left).  Elements of $\mathcal{M}(\infty)$ are lower triangular.

 As mentioned above, composition does not necessarily make sense for arbitrary elements of $\cpo$; in some cases the matrix multiplication involves infinite sums.  Similarly for elements of $\cpinfty$.
 However, the group action on $\coneo$ on $\cpo$ by composition on the right only involves finite sums.
 Furthermore, it corresponds to matrix multiplication.
 This is the reason for the utility of the power matrix.
 \begin{proposition}  $\mathcal{M}(0)$ is a group.
  The map $f \mapsto [f]$ preserves the group action of $\coneo$ on $\cpo$.
  That is, for $f \in \coneo$ and $g \in \cpo$.
  \[  [g \circ f]=[g][f].  \]
  In particular, the map from $\coneo$ to $\mathcal{M}(0)$ given by $f \mapsto [f]$ is a group homomorphism.

  Similarly, $\mathcal{M}(\infty)$ is a group and
  the map $f \mapsto [f]$ preserves the group action of $\coneinfty$ on $\cpinfty$
  in the sense that for $f \in \coneo$ and $g \in \cpo$.
  \[  [g \circ f]=[g][f].  \]
  The map from $\coneinfty$ to $\mathcal{M}(\infty)$ given by $f \mapsto [f]$ is a group homomorphism.
 \end{proposition}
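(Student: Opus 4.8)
The plan is to prove the composition formula $[g \circ f] = [g][f]$ first, since the group structure of $\mathcal{M}(0)$ then follows almost immediately. Fixing $f \in \coneo$ and $g \in \cpo$, I would compute the coefficients of $(g \circ f)(z)^n$ directly. Writing $w = f(z)$, the definition of $[g]$ gives $g(w)^n = \sum_j [g]^n_j w^j$, while the definition of $[f]$ gives $w^j = f(z)^j = \sum_k [f]^j_k z^k$. Substituting the second into the first and interchanging the order of summation yields
\[ (g \circ f)(z)^n = \sum_k \Big( \sum_j [g]^n_j \, [f]^j_k \Big) z^k, \]
so that $[g \circ f]^n_k = \sum_j [g]^n_j [f]^j_k = ([g][f])^n_k$, which is exactly the matrix product.

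The crucial point, and the step I expect to require the most care, is justifying that the inner sum $\sum_j [g]^n_j [f]^j_k$ is finite, so that the matrix product is well defined and the interchange of summation is legitimate. Here the triangular structure established after the definitions is essential: since $f \in \coneo$ has nonzero linear coefficient, $f(z)^j$ has lowest-order term of degree $j$, so $[f]^j_k = 0$ whenever $j > k$; and since $g \in \cpo$, the series $g(z)^n$ has lowest-order term of degree $pn$, so $[g]^n_j = 0$ whenever $j < pn$. Thus the summation index is confined to $pn \leq j \leq k$, a finite range. This is precisely the content of the remark in Section 2.1 that composition on the right by $\coneo$ involves only finite sums.

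With the composition formula in hand the group properties of $\mathcal{M}(0)$ are immediate. The identity $\mathrm{id}(z) = z$ lies in $\coneo$ and satisfies $[\mathrm{id}]^n_k = \delta_{nk}$, the identity matrix, which also shows matrix multiplication is a well-defined operation on $\mathcal{M}(0)$. For $f, g \in \coneo$ the composite $g \circ f$ again lies in $\coneo$, so $[g][f] = [g \circ f] \in \mathcal{M}(0)$, giving closure. Since $f \in \coneo$ has nonzero linear coefficient it admits a compositional inverse $f^{-1} \in \coneo$, and applying the formula to $f^{-1} \circ f = \mathrm{id} = f \circ f^{-1}$ gives $[f^{-1}][f] = I = [f][f^{-1}]$, so $[f]^{-1} = [f^{-1}] \in \mathcal{M}(0)$. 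Associativity transfers from associativity of composition via $([f][g])[h] = [(f \circ g) \circ h] = [f \circ (g \circ h)] = [f]([g][h])$. That $f \mapsto [f]$ is a homomorphism is simply the composition formula restricted to $\coneo$.

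Finally, the statements for $\mathcal{M}(\infty)$ follow from the identical computation with the roles of the triangular structures reversed: for $g \in \cpinfty$ the highest-order term of $g(z)^n$ has degree $pn$, so $[g]^n_j = 0$ for $j > pn$, and for $f \in \coneinfty$ the highest-order term of $f(z)^j$ has degree $j$, so $[f]^j_k = 0$ for $k > j$, confining the index to the finite range $k \leq j \leq pn$. The group axioms for $\mathcal{M}(\infty)$ then follow exactly as before, using that $\coneinfty$ is a group under composition.
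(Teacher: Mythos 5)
Your proposal is correct and follows essentially the same route as the paper: substitute the expansion of $f(z)^j$ into that of $g(w)^n$, interchange summation, and observe that the triangular structure confines the inner sum to the finite range $pn \leq j \leq k$ (respectively $k \leq j \leq pn$ at $\infty$), after which the group axioms follow from the homomorphism property. Your write-up is somewhat more explicit than the paper's about verifying the individual group axioms, but the underlying argument is identical.
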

 \begin{proof}  For the case of power series at $0$,
 consider the power series of $g \circ f$:
 \begin{align*}
 ( g \circ f )^n & = \sum_{k=pn}^\infty [g]^n_k f(z)^k
  = \sum_{k=pn}^\infty [g]_k^n \sum_{l=k}^\infty [f]^k_l z^l \\
 & = \sum_{l=pn}^\infty \left( \sum_{k=pn}^l [g]^n_k[f]^k_l \right)z^l.
 \end{align*}
 The case $p=1$ shows that $\mathcal{M}(0)$ is closed under multiplication
  and $f \mapsto [f]$ is a group homomorphism.  Clearly every element $[f]$ has the inverse $[f^{-1}]$
  by the above formula.  The general case $p \neq 1$ establishes the other claims.

  The claim for power series at $\infty$ follows from a similar computation:
   \begin{align*}
   (g \circ f)^n(z) & = \sum_{k=-\infty}^{pn} [g]^n_kf(z)^k = \sum_{k=-\infty}^{pn} [g]^n_k \sum_{l=-\infty}^k [f]^k_l z^l \\
   & = \sum_{l=-\infty}^{pn} \left( \sum_{k=pn}^l [g]^n_k [f]^k_l \right) z^l
  \end{align*}
 \end{proof}
  Note that there are
  no infinite sums.
  Furthermore if $f^{-1}$ is the inverse of $f \in \coneo$ in
  a neighbourhood of the origin, then $[f^{-1}]=[f]^{-1}$.  Similarly for elements of $\coneinfty$.

 The matrix groups $\mathcal{M}(0)$ and $\mathcal{M}(\infty)$ satisfy certain algebraic relations among the rows.
 \begin{proposition}
  If a doubly-infinite upper triangular matrix is in $\mathcal{M}(0)$
  then
  \[ n[f]^m_n = \sum_{l=m-1}^{n-1} m (n-l) [f]^{m-1}_l[f]^1_{n-l}.  \]
    A doubly-infinite lower triangular matrix
    in $\mathcal{M}(\infty)$
  satisfies the relations
  \[ n[f]^m_n = \sum_{l=n-1}^{m-1} m (n-l) [f]^{m-1}_l[f]^1_{n-l}.  \]
 \end{proposition}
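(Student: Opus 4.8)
The plan is to recognize both identities as the coefficient form of the elementary power rule $\frac{d}{dz} f^m = m\, f^{m-1} f'$, read off in the basis $\{z^k\}$. Since a matrix in $\mathcal{M}(0)$ is by definition $[f]$ for some $f \in \coneo$, its rows record the expansions $f^m = \sum_k [f]^m_k z^k$; in particular the first row is the expansion of $f$ itself and the $(m-1)$-st row that of $f^{m-1}$. Everything below is formal: differentiation and multiplication of formal power series are well defined, and --- as in the proof that $[g\circ f] = [g][f]$ --- each coefficient that appears is a finite sum, so no convergence questions arise.

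First I would differentiate the defining expansion of the $m$-th row term by term, obtaining $\frac{d}{dz} f^m = \sum_n n\,[f]^m_n z^{n-1}$, whose coefficient of $z^{n-1}$ is $n[f]^m_n$; this produces the weight $n$ on the left-hand side. Next I would expand the right-hand side $m\, f^{m-1} f'$ by substituting $f^{m-1} = \sum_l [f]^{m-1}_l z^l$ and $f' = \sum_j j\,[f]^1_j z^{j-1}$ (the latter from differentiating the first row), multiply the two series, and collect the coefficient of $z^{n-1}$. The condition $l + (j-1) = n-1$ forces $j = n-l$, giving $m \sum_l (n-l)\,[f]^{m-1}_l\,[f]^1_{n-l}$; here the weight $n-l$ is exactly the derivative exponent $j$ and the factor $m$ is the multiplicity from the power rule. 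Equating the two coefficients of $z^{n-1}$ yields the claimed relation $n[f]^m_n = \sum_l m(n-l)\,[f]^{m-1}_l\,[f]^1_{n-l}$.

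The only remaining point --- and the one that distinguishes the two cases --- is the range of summation, which I would read off from the support of the rows. For $\mathcal{M}(0)$ the rows are upper triangular: $[f]^{m-1}_l = 0$ unless $l \geq m-1$, and $[f]^1_{n-l} = 0$ unless $n-l \geq 1$, so $l$ runs from $m-1$ to $n-1$. For $\mathcal{M}(\infty)$, i.e. $f \in \coneinfty$, the rows are lower triangular and the inequalities reverse: $[f]^{m-1}_l = 0$ unless $l \leq m-1$ and $[f]^1_{n-l} = 0$ unless $n-l \leq 1$, so $l$ runs from $n-1$ to $m-1$, and the identical computation (now with term-by-term differentiation of series infinite in the negative direction) gives the second relation.

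I do not anticipate a genuine obstacle; the only things requiring care are getting the direction of these support inequalities right and confirming that the differentiation and multiplication are legitimate term by term. The latter holds because in each case the overlap of the supports of $[f]^{m-1}_{\bullet}$ and $[f]^1_{\bullet}$ is a bounded range of $l$, so every output coefficient depends on only finitely many input coefficients.
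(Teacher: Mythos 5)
Your proof is correct and follows essentially the same route as the paper: both differentiate $f^m$ to get $m f^{m-1} f'$, expand each side as a formal series, and equate coefficients, with the summation limits read off from the triangular support of the rows. The only cosmetic difference is that the paper collects the coefficient of $z^n$ (writing $[f]^1_{n-l+1}$ and $(n+1)[f]^m_{n+1}$) while you collect the coefficient of $z^{n-1}$, which matches the stated identity after the obvious index shift.
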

 \begin{proof}  For elements of $\mathcal{M}(0)$,
 we have by multiplying the power series of $mf^{m-1}$ and $f'$ that
 \[   m f^{m-1} f' = \sum_{n=m-1}^\infty \sum_{l=m-1}^n m(n-l+1)[f]^{m-1}_l [f]^1_{n-l+1} z^n.  \]
 The relations can be obtained by equating the coefficients of the above series
 with
 \[  \frac{\partial}{\partial z} f^m = \sum_{n=m-1}^\infty (n+1)[f]^m_{n+1} z^n. \]

  The case of $\mathcal{M}(\infty)$ can be established by equating the coefficients of
  \[  f^{m-1}(z) f'(z) = \sum_{n=-\infty}^{m-1}  \sum_{l=n}^{m-1} [f]^{m-1}_l [f]^1_{n-l+1} z^n  \]
  and
  \[  \frac{\partial f^m}{\partial z}(z) = \sum_{n=-\infty}^{m-1} (n+1) [f]^m_{n+1} z^n.  \]
 \end{proof}
  \begin{remark}  \label{re:power_rows_polynomial}
  In particular, if $[f] \in \mathcal{M}(0)$ then $[f]^m_n$ is a polynomial in $[f]^1_k$ for $k=1,\ldots, n-m+1$, so long as $m \geq 0$.
  If $[f] \in \mathcal{M}(\infty)$ then $[f]^m_n$ is a polynomial in $[f]^1_k$ for $k=n-m+1,\ldots,1$, so long as $m \geq 0$.
 \end{remark}
\end{subsection}
\begin{subsection}{The Lie algebras of the groups of power matrices}
  Next we give a representation of the Lie algebra of
  $\mathcal{M}(0)$.  Consider the
 tangent space at the identity of this matrix group, using the
 variation
 \begin{equation} \label{standardvariation}
  G_\lambda(z)= z + \lambda h(z) + o(\lambda),
 \end{equation}
 where $\lambda$ is a real parameter, and $h(z)$ is a holomorphic function
 in a neighbourhood of $0$.  We assume that $G_\lambda(0)=0$ and $h(0)=0$.
  We have by an easy computation
 \[  \left. \frac{d}{d\lambda} \right|_{\lambda=0} \left[G_\lambda
     \right]^m_n = m [h ]^1_{n-m+1}.  \]
 So it is reasonable to make the following definition.
 \begin{definition} \label{de:Liealgebra_zero}
  The infinitesimal power matrix of $h \in \coneounion$ is the infinite upper triangular
  matrix $\left<h\right>$ whose entry in row $m$ and column $n$ is $m h_{n-m+1}$.  Denote
  \[  \mathfrak{m}(0)= \{ \left< h\right> \,:\, h \in \coneounion \}.  \]
 \end{definition}

 Explicitly,
 \begin{equation} \label{eq:Liealgebraexpression_zero}
  \left<h\right> = \left(
  \begin{array}{cccccccc}
   & \vdots & \vdots & \vdots & \vdots & \vdots &  \\
   \cdots & -h_1 & - h_2 & - h_3 & - h_4 & h_5 & \cdots \\
   \cdots & 0 & 0 & 0 & 0 & 0 & \cdots \\
   \cdots & 0 & 0 & h_1 & h_2 & h_3 & \cdots \\
   \cdots & 0 & 0 & 0 & 2h_1 & 2h_2 & \cdots \\
   \cdots & 0 & 0 & 0 & 0 & 3h_1 & \cdots \\
    & \vdots & \vdots & \vdots & \vdots & \vdots & \\
  \end{array}
 \right).
 \end{equation}

Next we describe the Lie algebra of $\mathcal{M}(\infty)$.  Again, consider a one-parameter family of functions in $\mathcal{M}(\infty)$ given by say
\[ F_\lambda(z) = z + \lambda h(z) + o(\lambda) \]
for some $h \in \coneinftyunion$.   As above we must have that
 \[  \left. \frac{d}{d\lambda} \right|_{\lambda=0} \left[F_\lambda
     \right]^m_n = m [h ]^1_{n-m+1}.  \]
 However, matrices $\left< h \right>$ of this form are lower triangular.  Explicitly
  \begin{equation} \label{eq:Liealgebraexpression_infty}
  \left<h\right> = \left(
  \begin{array}{cccccccc}
   & \vdots & \vdots & \vdots & \vdots & \vdots &  \\
   \cdots & -h_1 & 0& 0 & 0 & 0 & \cdots \\
   \cdots & 0 & 0 & 0 & 0 & 0 & \cdots \\
   \cdots & h_{-1} & h_0 & h_1 & 0 & 0 & \cdots \\
   \cdots & 2h_{-2} & 2h_{-1} & 2h_0 & 2h_1 & 0 & \cdots \\
   \cdots & 3h_{-3} & 3h_{-2} & 3h_{-1} & 3h_0 & 3h_1 & \cdots \\
    & \vdots & \vdots & \vdots & \vdots & \vdots & \\
  \end{array}
 \right).
 \end{equation}
 \begin{definition} \label{de:Liealgebra}
  The power matrix of $h \in \coneinfty$ is the infinite lower triangular
  matrix $\left<h\right>$ whose entry in row $m$ and column $n$ is $m h_{n-m+1}$ .
   These matrices will be denoted by
  $\left< h \right>$.
  Denote
  \[  \mathfrak{m}(\infty)= \{ \left< h\right> \,:\, h \in \coneinftyunion \}.  \]
 \end{definition}

 The Lie algebras have obvious special bases. Let
 \begin{equation} \label{eq:edefinition}
  \mathbf{e}_k = \left<z^{k+1} \right>.
 \end{equation}
 Clearly
 $\{ \mathbf{e}_k : k \geq 0\}$
 forms a basis of $\mathfrak{m}(0)$ and
 $\{ \mathbf{e}_k : k \leq 0\}$
 forms a basis of $\mathfrak{m}(\infty)$.
 The matrix of $\mathbf{e}_k$ is
 \[  \left< z^{k+1}\right>^m_l = \left\{ \begin{array}{rr} m & m=l+k
   \\ 0 & \mbox{otherwise}. \end{array} \right. \]
 It's not hard to check that the matrix Lie bracket is
 \[  [ \mathbf{e}_k,\mathbf{e}_l ]= (l-k)\mathbf{e}_{k+l}.  \]

 This agrees with the Lie bracket in $\mathfrak{d}(0)$ and $\mathfrak{d}(\infty)$.
 \begin{proposition}
  The map
  \begin{align*}
   T : \mathfrak{d}(0) & \rightarrow  \mathfrak{m}(0)\\
    h \frac{\partial }{\partial z} & \mapsto  \left<h\right>
  \end{align*}
  is a Lie algebra isomorphism.  This also holds for
  $\mathfrak{m}(\infty)$ and $\mathfrak{d}(\infty)$.
 \end{proposition}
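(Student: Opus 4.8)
The plan is to show that $T$ is a linear bijection intertwining the two Lie brackets, treating the cases at $0$ and $\infty$ in parallel. Linearity is immediate from Definition~\ref{de:Liealgebra_zero}: the entry of $\left<h\right>$ in row $m$ and column $n$ is $m h_{n-m+1}$, which is linear in the coefficients of $h$, so $h \mapsto \left<h\right>$, and hence $T$, is linear. For injectivity I would read the coefficients of $h$ off the first row, since $\left<h\right>^1_n = h_n$; thus $\left<h\right> = 0$ forces $h = 0$. Surjectivity onto $\mathfrak{m}(0)$ holds by the very definition of that set. The identical three remarks settle these points at $\infty$ using Definition~\ref{de:Liealgebra}.

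The substance is the bracket identity, namely
\[ \left< h_1 h_2' - h_1' h_2 \right> = \left[ \left<h_1\right>, \left<h_2\right> \right] \]
for all $h_1, h_2 \in \coneounion$. Both sides are bilinear in the pair $(h_1,h_2)$: the left because $h \mapsto \left<h\right>$ is linear and $(h_1,h_2) \mapsto h_1 h_2' - h_1' h_2$ is bilinear, the right because the matrix commutator is bilinear. I would therefore reduce to the special basis. Applying the bracket~\eqref{de:bracketofderivations} to the monomials $z^{k+1}\,\partial/\partial z$ and $z^{l+1}\,\partial/\partial z$ gives $(l-k)\,z^{k+l+1}\,\partial/\partial z$, so the left-hand side evaluates to $(l-k)\left<z^{k+l+1}\right> = (l-k)\mathbf{e}_{k+l}$; the relation $[\mathbf{e}_k,\mathbf{e}_l] = (l-k)\mathbf{e}_{k+l}$ recorded above gives the same value for the right-hand side. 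Hence the two brackets agree on every pair of basis elements.

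The step deserving the most care — and the one I expect to be the main obstacle — is passing from the basis to arbitrary $h_1, h_2$, which are infinite series rather than finite combinations of the $\mathbf{e}_k$. The resolution is that no convergence is at issue, because the relevant sums are finite. Since matrices in $\mathfrak{m}(0)$ are upper triangular, the $(m,n)$ entry of the commutator is the finite sum
\[ \left[ \left<h_1\right>, \left<h_2\right> \right]^m_n = \sum_{j=m}^{n} \left( \left<h_1\right>^m_j \left<h_2\right>^j_n - \left<h_2\right>^m_j \left<h_1\right>^j_n \right), \]
and the coefficient of $z^{n-m+1}$ in $h_1 h_2' - h_1' h_2$ likewise depends on only finitely many coefficients of $h_1$ and $h_2$. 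Each entry of either side is thus a fixed bilinear form in the coefficient vectors of $h_1$ and $h_2$; since a bilinear form is determined by its values on basis pairs, and these agree by the preceding paragraph, the two entries coincide for all $h_1, h_2$. This gives the bracket identity in general, so $T$ is a Lie algebra isomorphism. The argument at $\infty$ is the same, except that the matrices are lower triangular, the commutator sum runs over $n \le j \le m$, and one inserts the $\infty$ bracket and band structure into the identical computation.
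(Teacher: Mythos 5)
Your proposal is correct and follows essentially the same route as the paper: the paper also dismisses injectivity and surjectivity as clear and verifies the bracket identity on the basis elements $z^{n+1}\,\partial/\partial z$ via $[\mathbf{e}_k,\mathbf{e}_l]=(l-k)\mathbf{e}_{k+l}$. Your only addition is the (worthwhile) explicit justification that agreement on basis pairs extends to arbitrary infinite series because each matrix entry of either side is a finite bilinear form in the coefficients -- a point the paper leaves implicit.
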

 \begin{proof} The map is clearly injective and surjective.
  So it suffices to check that the Lie brackets agree.  We have
  \[  \left[ h_1(z)\frac{\partial}{\partial z},h_2(z)\frac{\partial}{\partial z} \right]= \left(h_1(z)
  h_2'(z) - h_1(z) h_2'(z) \right) \frac{\partial}{\partial z}.  \]
  Thus on the basis $z^{n+1} \partial/\partial z$ we have
  \[  T \left[ z^{n+1}\frac{\partial}{\partial z},z^{k+1}\frac{\partial}{\partial z} \right]= T \left( (k-n) z^{k+n+1} \frac{\partial}{\partial z}\right) = (k-n) \left<z^{n+k+1} \right> = \left[ \left<z^{n+1}\right>, \left<z^{k+1} \right>\right] \]
  which proves the claim.
 \end{proof}

 The following theorem allows one to recognize when expressions in terms of functions have a simple matrix form.  Special cases were given in \cite{Ericpowermatrix}.
 \begin{theorem} \label{th:leftandrightmultiplication}
  Let $g \in \cpo$, $f \in \coneo$ and $h \in
  \coneounion$.  Then
  \begin{equation*}
  \left[ m g^{m-1} \circ f \cdot g' \circ f \cdot h \circ f \right]^1_n =
  \sum_{l,k} \left[ g \right]^m_l
  \left< h \right>^l_k \left[f \right]^k_n.
 \end{equation*}
 The same formula holds for $g \in \cpinfty$, $f \in
 \coneinfty$ and $h \in \coneinftyunion$.
 \end{theorem}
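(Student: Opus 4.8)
The key observation I would build the proof around is that the infinitesimal power matrix $\left<h\right>$ is nothing but the coefficient representation of the derivation $p \mapsto h\,p'$. Let me write $c_k(P)$ for the coefficient of $w^k$ in a formal series $P$. Since $\left<h\right>^l_k = l\,h_{k-l+1}$, expanding $h(w)p'(w)=\sum_l l\,p_l\,h(w)w^{l-1}$ for a series $p(w)=\sum_l p_l w^l$ and collecting the coefficient of $w^k$ (so that $l-1+j=k$, i.e.\ $j=k-l+1$) gives the identity
\[
c_k(h\,p') = \sum_l p_l \left<h\right>^l_k .
\]
I would record this first as a one-line computation, noting that for each fixed $k$ only finitely many $l$ contribute, by the triangular shape of $\left<h\right>$ described after Definition~\ref{de:Liealgebra_zero} (resp.\ Definition~\ref{de:Liealgebra}).

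With this in hand the plan is to read the right-hand side $\sum_{l,k}[g]^m_l\left<h\right>^l_k[f]^k_n = \left([g]\left<h\right>[f]\right)^m_n$ as two successive linear operations on power series. First I apply the identity above with $p=g^m$: since $p_l = [g]^m_l$ and $(g^m)'=m\,g^{m-1}g'$, the entry $\left([g]\left<h\right>\right)^m_k = \sum_l [g]^m_l\left<h\right>^l_k$ is exactly $c_k(Q)$, where $Q(w):=m\,g(w)^{m-1}g'(w)h(w)$ is the relevant formal series.

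Second, right multiplication by $[f]$ implements composition with $f$. Writing $Q(w)=\sum_k Q_k w^k$ with $Q_k=\left([g]\left<h\right>\right)^m_k$ and using $f^k=\sum_n [f]^k_n z^n$, I get $Q\circ f = \sum_n\!\left(\sum_k Q_k [f]^k_n\right)\! z^n$, so $c_n(Q\circ f)=\sum_k\left([g]\left<h\right>\right)^m_k [f]^k_n = \left([g]\left<h\right>[f]\right)^m_n$, which is the right-hand side. But $Q\circ f = m\,(g\circ f)^{m-1}(g'\circ f)(h\circ f)$ is precisely the function on the left, and by the definition of the power matrix at $0$ its coefficient of $z^n$ is $[Q\circ f]^1_n$. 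This establishes the formula. The computation at $\infty$ is identical, the only change being the direction in which the (finite) sums run, and there one invokes the $\infty$-analogues of the same facts.

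The main point requiring care — and the only real obstacle — is well-definedness of the matrix products and of the composition $Q\circ f$, i.e.\ that for each fixed index only finitely many terms are nonzero. For $\left([g]\left<h\right>\right)^m_k$ this is immediate from triangularity; for $Q\circ f$ with $f\in\coneo$ (resp.\ $f\in\coneinfty$) it is exactly the finiteness that makes the right group action of $\coneo$ on $\cpo$ (resp.\ $\coneinfty$ on $\cpinfty$) unambiguous, already established in the proof that $\mathcal{M}(0)$ and $\mathcal{M}(\infty)$ are groups. I would also remark that $Q=m\,g^{m-1}g'h$ is a legitimate element of the appropriate space $\coneounion$ (resp.\ $\coneinftyunion$), so that $Q\circ f$ genuinely makes sense before the coefficient comparison is carried out.
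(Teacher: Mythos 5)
Your proof is correct, but it takes a genuinely different route from the paper's. The paper argues variationally: it introduces the one-parameter family $F_\lambda(z)=z+\lambda h(z)+o(\lambda)$, computes $\frac{d}{d\lambda}\big|_{\lambda=0}\,g^m\circ F_\lambda\circ f$ by the chain rule to obtain the left-hand side, and on the matrix side differentiates the product $[g][F_\lambda][f]$ entrywise, using $\frac{d}{d\lambda}\big|_{\lambda=0}[F_\lambda]=\left<h\right>$, to obtain the right-hand side. You instead verify by a direct coefficient computation that right multiplication by $\left<h\right>$ implements the derivation $p\mapsto h\,p'$ on coefficient vectors, apply this with $p=g^m$ to identify $\left([g]\left<h\right>\right)^m_k$ with $\left[mg^{m-1}g'h\right]^1_k$, and then use that right multiplication by $[f]$ implements composition with $f$. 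In effect you prove equation (\ref{eq:leftmultiplication}) of Corollary \ref{co:justleftandjustright} first and deduce the theorem from it, inverting the paper's logical order (the paper derives that corollary from the theorem by setting $f(z)=z$). What your route buys is that it is purely algebraic: it never has to make sense of the $o(\lambda)$ term or justify interchanging $d/d\lambda$ with the coefficient extraction, points the paper leaves implicit; the paper's route, in exchange, exhibits the identity as the derivative of the homomorphism property $[g\circ F_\lambda\circ f]=[g][F_\lambda][f]$, which is conceptually where the formula comes from. One minor imprecision: for general $g\in\cpo$ with $pm<1$, the series $Q=mg^{m-1}g'h$ lies in $\mathbb{C}_{\geq pm}[[z]]$ rather than in $\coneounion$; this does not affect your argument, since for each fixed $n$ the composition with $f\in\coneo$ still involves only the finitely many indices $pm\leq k\leq n$.
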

 \begin{proof} We prove the claim for power series at $0$.
  Let $F_\lambda(z)= z + \lambda h(z) + o(\lambda)$.
  \begin{align*}
   \left.\frac{d}{d\lambda} \right|_{\lambda=0} g^m \circ
   F_\lambda \circ f & \left. = m g^{m-1} \circ F_ \lambda \circ f \cdot g' \circ F_\lambda \circ f  \cdot
   \frac{dF_\lambda}{d\lambda}    \circ f\right|_{\lambda=0}
   \\
   & = m g^{m-1} \circ f \cdot h \circ f \cdot g'  \circ f.
  \end{align*}
  On the other hand, by the linearity of matrix multiplication
  \[ \left.\frac{d}{d\lambda} \right|_{\lambda=0} g(z)^m =
    \left.\frac{d}{d\lambda} \right|_{\lambda=0} [g]^m_k
    [F_\lambda]^k_l[f]^l_n z^n = [g]^m_k\left<h \right>^k_l [f]^l_n z^n.  \]
  Comparing the coefficients proves the claim.  The same proof works
  for power series at $\infty$.
 \end{proof}
 \begin{corollary} \label{co:justleftandjustright}
 Let $g \in \cpo$, $f \in \coneo$ and $h \in
 \coneounion$.  Then
 \begin{equation} \label{eq:leftmultiplication}
  \left[ m g^{m-1} \, g' \, h \right]^1_n =  \sum_k \left[ g \right]^m_k
  \left<  h \right>^k_n
 \end{equation}
 and
 \begin{equation} \label{eq:rightmultiplication}
  \left[ m f^{m-1} \cdot h \circ f \right]^1_n = \sum_k \left< h
  \right>^m_k \left[ f \right]^k_n.
 \end{equation}
 The same formulas hold for $g \in \cpinfty$, $f \in
 \coneinfty$ and $h \in \coneinftyunion$.
 \end{corollary}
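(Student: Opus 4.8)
The plan is to obtain both identities as specializations of Theorem~\ref{th:leftandrightmultiplication}, in each case replacing one of the two general series by the identity series $z$. The one fact I need first is that the identity function has the identity matrix as its power matrix: the defining relation $z^n = \sum_k [z]^n_k z^k$ forces $[z]^n_k = \delta^n_k$, so $[z] = I$, and the same computation gives $[z] = I$ at $\infty$. I would also note at the outset that $z \in \coneo$ and $z \in \cpo$ (with leading coefficient $1$), so that both specializations are legitimate inputs to the theorem.

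To prove \eqref{eq:leftmultiplication} I would set $f = z$. Since every composition $\varphi \circ f$ then reduces to $\varphi$, the left-hand side $[m g^{m-1}\circ f \cdot g'\circ f \cdot h\circ f]^1_n$ of the theorem collapses to $[m g^{m-1} g' h]^1_n$. On the right-hand side the factor $[f]^k_n = \delta^k_n$ forces $k = n$ in the double sum $\sum_{l,k} [g]^m_l \langle h\rangle^l_k [f]^k_n$, leaving $\sum_l [g]^m_l \langle h\rangle^l_n$; after relabeling $l$ as $k$ this is precisely $\sum_k [g]^m_k \langle h\rangle^k_n$, which is \eqref{eq:leftmultiplication}.

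To prove \eqref{eq:rightmultiplication} I would instead set $g = z$. Then $g^m = z^m$ gives $[g]^m_l = \delta^m_l$, while $g^{m-1}\circ f = f^{m-1}$ and $g'\circ f = 1$, so the left-hand side of the theorem becomes $[m f^{m-1}\cdot h\circ f]^1_n$. On the right, $[g]^m_l = \delta^m_l$ forces $l = m$, leaving $\sum_k \langle h\rangle^m_k [f]^k_n$, which is \eqref{eq:rightmultiplication}. Both arguments apply verbatim to the series at $\infty$, again using $[z] = I$.

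There is no genuine obstacle in this corollary; it is pure specialization. The only points demanding attention are keeping track of which index each Kronecker delta eliminates --- so that \eqref{eq:leftmultiplication} ends up summing over the column index and \eqref{eq:rightmultiplication} over the row index of $\langle h\rangle$ --- and confirming that the identity series lies in the correct space of power series for each substitution.
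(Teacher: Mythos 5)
Your proposal is correct and follows exactly the paper's own argument: specialize Theorem \ref{th:leftandrightmultiplication} by taking $f(z)=z$ for \eqref{eq:leftmultiplication} and $g(z)=z$ for \eqref{eq:rightmultiplication}, using $[z]=I$ to collapse the sums. The paper states this in one line; you have merely supplied the routine bookkeeping.
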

 \begin{proof}  The first equation follows by choosing $f(z)=z$ in
 Theorem \ref{th:leftandrightmultiplication}.  The second equation
 follows by choosing $g(z)=z$.  This works both at $0$ and at
 $\infty$.
 \end{proof}
 \begin{remark}  A special case of these was derived in
 \cite{Ericpowermatrix}.  However the version of
 (\ref{eq:rightmultiplication}) given there contains an error.
 \end{remark}

 It follows immediately from
 Corollary \ref{co:justleftandjustright}, that application of a
 derivation $h(z) d/dz$ of a power series corresponds to multiplication on the
 right by $\left<h \right>$.
 \begin{proposition} \label{pr:right_mult_is_derivation}
  Let $h \in \coneounion$ and $g \in \cpo$.  Then
  \[  \left[ h(z) \frac{\partial}{\partial z} g(z)^m \right]^1_k
    = \sum_l [g]^m_l \left<h\right>^l_k.\]
  The same formula holds for $h \in \coneinftyunion$ and
  $g \in \cpinfty$.
 \end{proposition}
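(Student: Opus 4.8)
The plan is to recognize the left-hand side as a special instance of equation (\ref{eq:leftmultiplication}) already established in Corollary \ref{co:justleftandjustright}. First I would expand the action of the derivation using the ordinary product and chain rules. Since the derivation $h(z)\frac{\partial}{\partial z}$ acts on $g(z)^m$ by differentiating and then multiplying by $h$, one has
\[ h(z)\frac{\partial}{\partial z}\, g(z)^m = h(z)\cdot m\, g(z)^{m-1} g'(z) = m\, g^{m-1}\, g'\, h. \]
This identity holds at the level of formal power series, with no convergence or well-definedness issue: for $g \in \cpo$ and $h \in \coneounion$ the factors $g^{m-1}$, $g'$, and $h$ all lie in $\cpounion$, and their product involves only finite sums in each coefficient, so the formal manipulation is unambiguous.

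Having made this identification, the result is immediate. The quantity $[m g^{m-1} g' h]^1_k$ is exactly the object computed in (\ref{eq:leftmultiplication}), which asserts
\[ \left[ m g^{m-1}\, g'\, h \right]^1_n = \sum_k [g]^m_k \left< h \right>^k_n. \]
Relabeling the free index $n$ as $k$ and the summation index $k$ as $l$ turns the right-hand side into $\sum_l [g]^m_l \left< h \right>^l_k$, which is precisely the claimed formula.

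For the statement at $\infty$, I would simply observe that Corollary \ref{co:justleftandjustright} was established for power series at $\infty$ as well, so the same rewriting $h\,\partial_z\, g^m = m g^{m-1} g' h$ together with the $\infty$-version of (\ref{eq:leftmultiplication}) gives the conclusion verbatim. There is essentially no obstacle to overcome: the entire content of the proposition is the recognition that the formal derivation $h\,\partial_z$ applied to a power $g^m$ produces exactly the function on the left of (\ref{eq:leftmultiplication}). The genuine combinatorial work was already carried out in proving Theorem \ref{th:leftandrightmultiplication} and its corollary, so the only care needed here is to confirm that the product-rule rewriting is legitimate as an operation on formal power series.
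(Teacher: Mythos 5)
Your proof is correct and follows exactly the route the paper takes: the paper states this proposition as an immediate consequence of Corollary \ref{co:justleftandjustright}, obtained by rewriting $h(z)\,\partial_z\, g(z)^m = m\,g^{m-1} g' h$ and invoking equation (\ref{eq:leftmultiplication}). Your additional remark that the product-rule manipulation is legitimate for formal power series (only finite sums per coefficient) is a sensible check, and the argument at $\infty$ carries over just as you say.
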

\end{subsection}
\begin{subsection}{Derivations and the formal Loewner differential equations}
   We show here that a version of the Loewner equations hold for formal power series.

 There are two ways to write the tangent vector to a formal power series.  One of these is a derivation.
 \begin{proposition} \label{pr:formalLoewnerequations}
  Let $f_t \in \coneo$ for $t \in (a,b)$ have coefficients which are differentiable in $t$.
  \begin{enumerate}
   \item There's an $h_t \in \coneounion$ such that
   \begin{equation}  \label{eq:formal_Loewner_partial}
    \frac{\partial f_t}{\partial t}(z)= h_t(z) f_t'(z).
   \end{equation}
   The $n$th coefficient of $h_t$ depends only on the first $n$ coefficients of $f_t$ and ${\partial f_t}/{\partial t}$.
   \item There's an $\tilde{h}_t \in \coneounion$ such that
   \begin{equation} \label{eq:formal_Loewner_ordinary}
    \frac{\partial f_t}{\partial t}(z)= \tilde{h}_t \circ f_t(z).
   \end{equation}
   The $n$th coefficient of $\tilde{h}_t$ depends only on the first $n$ coefficients of $f_t$ and ${\partial f_t}/{\partial t}$.
  \end{enumerate}
   Similarly, if $f_t \in \coneinfty$ has differentiable coefficients, one can find $h_t, \tilde{h}_t \in \coneinftyunion$ such that $f_t$, $h_t$ and $\tilde{h}_t$ satisfy equations (\ref{eq:formal_Loewner_partial}) and (\ref{eq:formal_Loewner_ordinary}).
 \end{proposition}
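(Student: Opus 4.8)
The plan is to solve each equation algebraically for its generator and then verify that the resulting series lies in the prescribed space with the asserted finite coefficient dependence. For the partial equation (\ref{eq:formal_Loewner_partial}), observe that since $f_t \in \coneo$ its formal derivative $f_t'(z) = f_1(t) + 2 f_2(t) z + \cdots$ has nonvanishing constant term $f_1(t)$, so $f_t'$ is a unit in $\mathbb{C}[[z]]$ and admits a formal multiplicative inverse $(f_t')^{-1}$. I would then set
\[ h_t = \frac{\partial f_t}{\partial t}\,(f_t')^{-1}, \]
which satisfies (\ref{eq:formal_Loewner_partial}) by construction. For the ordinary equation (\ref{eq:formal_Loewner_ordinary}), I would use that $f_t \in \coneo$ admits a compositional inverse $f_t^{-1} \in \coneo$ (its leading coefficient is nonzero), and put $\tilde h_t = (\partial f_t/\partial t) \circ f_t^{-1}$, so that $\tilde h_t \circ f_t = \partial f_t/\partial t$ automatically.

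It then remains to verify the two structural claims. Since $\partial f_t/\partial t = \sum_{n \geq 1} \dot f_n(t)\, z^n$ has no constant term, it begins at order at least $1$; multiplying by the order-zero unit $(f_t')^{-1}$ preserves this, so $h_t \in \coneounion$. Likewise, composing the order-$\geq 1$ series $\partial f_t/\partial t$ with the order-$1$ series $f_t^{-1}$ yields a series of order $\geq 1$, and only finitely many terms contribute to each coefficient, so the composition is unambiguous and $\tilde h_t \in \coneounion$. For the dependence claim in the partial case, the coefficient of $z^n$ in $h_t$ is a finite sum of products of a coefficient of $\partial f_t/\partial t$ of index at most $n$ with a coefficient of $(f_t')^{-1}$ of index at most $n-1$; since the coefficients of $(f_t')^{-1}$ up to index $n-1$ are polynomials in $f_1, \dots, f_n$, the $n$th coefficient of $h_t$ depends only on the first $n$ coefficients of $f_t$ and of $\partial f_t/\partial t$. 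The same reasoning applies to $\tilde h_t$, using that the coefficients of $f_t^{-1}$ up to order $n$ are polynomials in $f_1, \dots, f_n$ (by Lagrange inversion, or by the recursion extracted from $f_t \circ f_t^{-1} = z$).

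The case of power series at $\infty$ goes through verbatim once ``order'' is reinterpreted as the top power of $z$ (equivalently, the lowest power of $1/z$). For $f_t \in \coneinfty$ the derivative $f_t'$ again has nonzero leading coefficient $f_1(t)$, now the coefficient of $z^0$, so it is invertible in $\mathbb{C}[[1/z]]$, and $f_t$ has a compositional inverse in $\coneinfty$. The only point requiring care, and the main bookkeeping obstacle, is tracking the direction of the inequalities so as to confirm that $h_t$ and $\tilde h_t$ land in $\coneinftyunion$ (top power $\leq 1$) and that every coefficient is computed by a finite sum. This follows because $(f_t^{-1})^k$ has top power $z^k$, so for fixed $n$ only the indices $k$ with $n \leq k \leq 1$ contribute to the coefficient of $z^n$, which is finite in number and again involves only the first $n$ coefficients of $f_t$ and of $\partial f_t/\partial t$.
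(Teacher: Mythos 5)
Your proposal is correct and follows essentially the same route as the paper: the paper also defines $h_t = (\partial f_t/\partial t)\cdot(f_t')^{-1}$ and $\tilde h_t = (\partial f_t/\partial t)\circ f_t^{-1}$ and then notes the finite coefficient dependence. You simply spell out the order-counting and finiteness verifications that the paper leaves as ``easily seen.''
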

 \begin{proof} Observe that $\partial f_t/\partial t$ is in $\cpo$ for some $p \geq 1$.  To prove the first claim, formally set
 \[  h_t(z)= \frac{1}{f_t'(z)} \frac{\partial f_t}{\partial t}(z)  \]
 and rewrite this as a power series in $\cpo$.  This power series satisfies the differential equation, and it is easily seen that the $n$th coefficient of $h_t$ only depends on the first $n$ coefficients of $f_t$ and $\partial f_t/\partial t$.

 To prove the second claim, set
 \[  \tilde{h}_t(z) = \frac{ \partial f_t}{\partial t} \circ f_t^{-1}(z).  \]
 Since $\coneo$ acts on $\cpounion$ on the right, the claim follows.  It is again easy to see that the $n$th coefficient of $h_t$ depends only on the first $n$ coefficients of $f_t$ and  $\partial f_t/\partial t$.
 \end{proof}
 In the standard Loewner theory, $h_t = z p_t$ and $\tilde{h}_t=-zp_t$ where
 $p_t(z)=1 + p_1 z + p_2 z^2 + \cdots \in \mathcal{P}$
 where
 \begin{equation} \label{eq:Pdefinition}
  \mathcal{P} =\{ p:\mathbb{D} \rightarrow \mathbb{C} \,:\, p \
  \mbox{holomorphic}, \mbox{Re}(p) >0, \ p(0)=1\}.
 \end{equation}
 The Loewner partial differential equation is
 \begin{equation} \label{eq:Loewner_PDE_definition}
  \frac{\partial f_t}{\partial t}(z) = z p_t(z) f_t'(z)
 \end{equation}
 and the Loewner ordinary differential equation is
 \begin{equation} \label{eq:Loenwer_ODE_definition}
  \frac{\partial f_t}{\partial t}(z) = -f_t(z) p_t \circ f_t(z).
 \end{equation}
 The ordinary differential equation, with suitable initial conditions,
 is guaranteed to have a solution which is holomorphic and univalent
 on the entire disc \cite{Pommerenke}.  The partial differential equation might not have
 an analytic solution on the disc.  Note that the choice $h(z)=zp(z)$ is motivated by the fact that $h \in \coneounion$.

 We will refer to equation (\ref{eq:formal_Loewner_partial}) as the formal Loewner partial differential equation and (\ref{eq:formal_Loewner_ordinary})  as the formal Loewner ordinary differential equation.  The term ``formal'' denotes the relation to formal power series.  Note that both the ordinary and partial formal Loewner equations are perfectly reasonable infinite systems of ordinary differential equations.
 \begin{remark} \label{re:loewner_finiteness}
 Furthermore, both systems of ordinary differential equations have the property that for each $n$ the system of equations for the first $n$ coefficients of $f_t$ is a {\it finite} system, and thus with suitable regularity solutions can be guaranteed on some interval.
 However the interval of existence depends on $n$, so there is no guarantee that there is an interval on which the infinite system has a solution.
 \end{remark}
 \begin{remark} \label{re:Loewner_natural}
 In \cite[Remark 5]{Ericpowermatrix}, the author made the observation that
 the form of the Loewner equations have little to do with the theory
 of subordination chains.  Here we see that the form of the Loewner equations arises naturally
 even in the very general setting of formal power series.
 \end{remark}
 \begin{remark} \label{re:Loewner_zero_infty_transform}
  There is a natural transformation between power series at $0$ and
  $\infty$, which is respected by the ordinary and partial Loewner
  differential equations.
  That is, we have the transformations
  \begin{eqnarray*}
   \coneo & \rightarrow & \coneinfty \\
   f(z) & \mapsto & 1/f(1/z)
  \end{eqnarray*}
  and
  \begin{eqnarray*}
   \coneounion & \rightarrow & \coneinftyunion \\
   h(z) & \mapsto & -z^2h(1/z).
  \end{eqnarray*}
  It's not hard to see that $f_t$ and $h_t$ satisfy the Loewner
  ordinary differential equation at $0$ if and only if
  $\tilde{f}_t(z)=1/f_t(1/z)$ and $\tilde{h}_t(z)=-z^2h_t(1/z)$
  satisfy the Loewner ordinary differential equation at $\infty$.
  The same statement holds for the Loewner partial differential
  equation.
 \end{remark}

 There is of course a matrix version of the formal Loewner equations.
 \begin{proposition} \label{pr:formal_matrix_Loewner_equal}
  \begin{enumerate}
   \item $f_t \in \coneo$ and $h_t \in \coneounion$ satisfy the formal Loewner partial differential equation on an interval $(a,b)$ if and only if their power matrices satisfy
       \[  \frac{d}{dt} [f_t] = [f_t] \left< h_t \right>  \]
       on $(a,b)$.
   \item $f_t \in \coneo$ and $h_t \in \coneounion$ satisfy the formal Loewner ordinary differential equation on an interval $(a,b)$ if and only if their power matrices satisfy
       \[  \frac{d}{dt} [f_t] = \left<h_t \right> [f_t].  \]
  \end{enumerate}
  The same claim holds for the formal Loewner differential equations at $\infty$.
 \end{proposition}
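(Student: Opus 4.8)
The plan is to differentiate the defining relation of the power matrix in $t$ and then recognize the resulting right-hand side as a matrix product by means of Corollary~\ref{co:justleftandjustright}. Throughout, recall that the entries of $[f_t]$ are read off from $f_t(z)^m = \sum_n [f_t]^m_n z^n$, and that for $f_t \in \coneo$ all the sums appearing below are finite by the triangularity of the matrices, so that term-by-term differentiation in $t$ is legitimate whenever the coefficients of $f_t$ are differentiable.

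First I would establish the forward implication for the partial equation. Differentiating $f_t(z)^m = \sum_n [f_t]^m_n z^n$ in $t$ gives $\frac{\partial}{\partial t} f_t(z)^m = m f_t(z)^{m-1} \frac{\partial f_t}{\partial t}(z)$, so that $\frac{d}{dt}[f_t]^m_n$ is precisely the coefficient of $z^n$ in $m f_t^{m-1} \partial_t f_t$. Substituting the formal Loewner partial differential equation $\partial_t f_t = h_t f_t'$ turns this into the coefficient of $z^n$ in $m f_t^{m-1} f_t' h_t$, which by the left-multiplication formula~(\ref{eq:leftmultiplication}) of Corollary~\ref{co:justleftandjustright} (with $g = f_t$) equals $\sum_k [f_t]^m_k \left<h_t\right>^k_n$, i.e.\ the $(m,n)$ entry of $[f_t]\left<h_t\right>$. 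This holds for every $m$ and $n$, which is the asserted matrix identity. For the ordinary equation the same differentiation applies, but now I substitute $\partial_t f_t = h_t \circ f_t$ and invoke the right-multiplication formula~(\ref{eq:rightmultiplication}) (with $f = f_t$), obtaining $\sum_k \left<h_t\right>^m_k [f_t]^k_n$, the $(m,n)$ entry of $\left<h_t\right>[f_t]$.

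For the converse implications I would read off the first row ($m=1$) of each matrix equation. Since $f_t^1 = f_t$, the entry $\frac{d}{dt}[f_t]^1_n$ is the coefficient of $z^n$ in $\partial_t f_t$, while the corresponding entry of $[f_t]\left<h_t\right>$ (resp.\ $\left<h_t\right>[f_t]$) is, again by Corollary~\ref{co:justleftandjustright} with $m=1$, the coefficient of $z^n$ in $f_t' h_t = h_t f_t'$ (resp.\ in $h_t \circ f_t$). Equality of all these coefficients is exactly equation~(\ref{eq:formal_Loewner_partial}) (resp.~(\ref{eq:formal_Loewner_ordinary})). The case of power series at $\infty$ is identical once one uses the versions of Corollary~\ref{co:justleftandjustright} valid at $\infty$.

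I expect the only delicate point to be bookkeeping rather than any genuine obstacle: one must check that differentiation in $t$ commutes with extraction of the $z^n$-coefficient and with the passage from $f_t$ to $f_t^m$, and that each entry of the products $[f_t]\left<h_t\right>$ and $\left<h_t\right>[f_t]$ is a finite sum. Both follow from the triangular structure of $\mathcal{M}(0)$ and $\mathfrak{m}(0)$ (resp.\ at $\infty$) recorded after Definitions~\ref{de:powermatrixatoMP} and~\ref{de:Liealgebra_zero}, so that for fixed $m,n$ only finitely many coefficients of $f_t$ are involved. With this in hand the computation reduces entirely to the already-established multiplication formulas.
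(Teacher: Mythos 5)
Your proposal is correct and follows essentially the same route as the paper: differentiate $f_t^m$ in $t$, substitute the relevant formal Loewner equation, and identify the result via the left- and right-multiplication formulas of Corollary~\ref{co:justleftandjustright}, with the converse read off from the first row of the matrix equation. Your explicit remarks on finiteness of the sums and term-by-term differentiation are a slight elaboration of what the paper leaves implicit, but the argument is the same.
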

 \begin{proof}  Assume that $f_t$ satisfies the formal Loewner partial differential equation.  It follows that
 \[  \frac{\partial}{\partial t} f_t^m = m f^{m-1} \frac{\partial f_t}{\partial t} = m f^{m-1} f_t' h_t.  \]
 By Corollary \ref{co:justleftandjustright} it follows that
 \[  \frac{\partial}{\partial t} \left[f_t \right]^m_k = \sum_n [f_t]^m_n \left<h\right>^n_k  \]
 so $[f_t]$ and $\left<h_t\right>$ satisfy the matrix Loewner partial differential equation.  Conversely, assume that $[f_t]$ and $\left<h_t\right>$ satisfy the matrix Loewner partial differential equation.  Again by Corollary \ref{co:justleftandjustright}, the first row of the matrix equation is equivalent to
 \[  \frac{\partial}{\partial t} f_t = f_t' h_t.  \]

 Now assume that $f_t$ and $h_t$ satisfy the formal Loewner ordinary differential equation.  In this case,
 \[  \frac{\partial}{\partial t} f_t^m = m f^{m-1} \frac{\partial f_t}{\partial t} = m f^{m-1} h_t \circ f_t. \]
 By Corollary \ref{co:justleftandjustright},
 \[  \frac{\partial}{\partial t} \left[f_t \right]^m_k = \sum_n \left<h\right>^m_n [f_t]^n_k \]
 so the matrix Loewner ordinary differential equation is satisfied.  Conversely, if the matrix Loewner ordinary differential equation is satisfied, then the first row of this equation is equivalent to the formal Loewner ordinary differential equation by Corollary \ref{co:justleftandjustright}.
 \end{proof}
 \begin{remark} \label{re:matrix_Loewner_ae}
  Of course, Proposition \ref{pr:formal_matrix_Loewner_equal} can be rephrased with weakened regularity, e.g. the matrix Loewner equations hold almost everywhere if and only if the formal Loewner equations hold almost everywhere, etc.
 \end{remark}
 \begin{remark} In the power matrix picture, we can clearly see that the two Loewner equations arise from the two possible trivializations of the tangent bundle to $\mathcal{M}(0)$ or $\mathcal{M}(\infty)$; that is by left or right multiplication.
 \end{remark}
\end{subsection}
\end{section}
\begin{section}{The exponential and logarithm}
  The exponential map connects the Lie group to its Lie algebra; for
 matrix groups, the exponential map is simply the matrix
 exponential.  In the present situation, although we have not
 discussed differentiable structures, the exponential map is
 nevertheless well-behaved and has all the desired algebraic
 properties.

 In this section we demonstrate that the exponential map is in some sense finite.  That is, the $k$th entry in the first row of $\exp\left<h \right>$ depends only on the first $k$ coefficients of $h$.  Furthermore, it is possible to solve for the first $k$ coefficients of $h$ in terms of the first $k$ coefficients of $\exp \left<h \right>$.
 The matrix exponential always converges in
 the sense that each coefficient is a convergent infinite sum, and
 is an entire function of the coefficients of $h$.
 Furthermore, we show that the matrix exponential is onto, and in a
 sense nearly one-to-one.
\begin{subsection}{The exponential map}
 We begin with some elementary observations.  By the $k$th diagonal of a matrix $A$, we mean the set of entries in the locations $A^m_{m+k}$.
 \begin{definition} A matrix $A$ is said to be $k$-diagonal if all of its entries are zero with the exception of the $k$th diagonal.  That is, $A^m_n=0$ unless $n=m+k$.
 \end{definition}
 \begin{example} $\mathbf{e}_k= \left< z^{k+1} \right>$ is $k$-diagonal for all integers $k$.
 \end{example}
 \begin{definition}
  For a doubly infinite matrix $A$, the $(m,n)$th principal block of $A$ is the matrix $[A]^i_j$ with $m\leq i \leq n$ and $m \leq j \leq n$.
 \end{definition}
 The following proposition guarantees that one can perform the usual
 matrix operations without worrying about convergence.
 \begin{proposition} \label{pr:multiplying_finite}
  Let $A$ and $B$ be doubly-infinite matrices.  Assume that $A$ and
  $B$ are both upper triangular, or that $A$ and $B$ are both lower
  triangular (in the sense that $A^i_j = 0$ if $j<i$ or $i <j$
  respectively).  Then the $(m,n)$th principal block of $AB$ is the
  product of the $(m,n)$th blocks of $A$ and $B$.
 \end{proposition}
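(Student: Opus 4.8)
The plan is to reduce everything to two observations: under triangularity the matrix product $AB$ is computed entrywise by a \emph{finite} sum, and the only indices $k$ contributing to an entry $(AB)^i_j$ with $i,j$ inside the block already lie in $[m,n]$. I would prove the upper triangular case in full and then note that the lower triangular case is its mirror image, obtained by reversing the relevant inequalities.

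First I would write out the candidate entry $(AB)^i_j = \sum_k A^i_k B^k_j$ and verify it is well-defined. In the upper triangular case $A^i_k = 0$ unless $k \geq i$, while $B^k_j = 0$ unless $k \leq j$; hence the summand $A^i_k B^k_j$ can be nonzero only when $i \leq k \leq j$. The sum therefore runs over the finite range $k = i, \dots, j$, so $AB$ makes sense entrywise and is again upper triangular. Next, fixing $i,j$ with $m \leq i,j \leq n$, I would note that since the only nonzero contributions come from $i \leq k \leq j$, and $m \leq i$ together with $j \leq n$, every contributing index satisfies $m \leq k \leq n$. Consequently the sum may be truncated to this range without changing its value:
\[
 (AB)^i_j = \sum_{k=m}^{n} A^i_k B^k_j .
\]
The right-hand side is, by definition, the $(i,j)$ entry of the product of the $(m,n)$th blocks of $A$ and $B$ viewed as ordinary finite matrices. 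Comparing entries over all $m \leq i,j \leq n$ then gives the claim.

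For the lower triangular case the same argument applies after swapping the inequalities: there $A^i_k \neq 0$ forces $k \leq i$ and $B^k_j \neq 0$ forces $k \geq j$, so a nonzero summand requires $j \leq k \leq i$, which for block indices again lies in $[m,n]$, and the identical truncation argument closes the proof. I do not expect a genuine obstacle here; the only point demanding care is keeping the two triangularity conventions straight, so that in each case the band of contributing indices $k$ is verifiably contained in $[m,n]$ and no tail of the infinite sum is silently discarded.
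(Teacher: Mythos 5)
Your proof is correct and follows essentially the same route as the paper's: both arguments write $(AB)^i_j = \sum_k A^i_k B^k_j$, use triangularity to show that only indices $k$ between $i$ and $j$ (hence within $[m,n]$) contribute, and conclude the block identity, with the lower triangular case handled by symmetry. Your version is slightly more explicit about why the truncated sum equals the entry of the product of the finite blocks, but there is no substantive difference.
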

 \begin{proof}
  Assume $A$ and $B$ are infinite upper-triangular matrices.  Let $\tilde{A}$ and $\tilde{B}$
  denote the $(m,n)$ principal blocks of $A$ and $B$ respectively.  It is clear that for all $n \leq k \leq m$ and $n \leq l \leq m$ the entry $[AB]^k_l$ involves only elements of $\tilde{A}$ and $\tilde{B}$, since
  \[  [AB]^k_l = \sum_{i=k}^l [A]^k_i [B]^i_l.  \]
  The proof of the other case is identical.
 \end{proof}
 Since the exponential of any finite square matrix converges
 \cite{Curtis}, we immediately have the following corollary.
 \begin{corollary} \label{co:exp_converges}Let $h \in \coneounion$.  Then each
 coefficient of the matrix exponential $\exp{\left<h\right>}$
 converges.  Further more the principal $(m,n)$ block of
 $\exp{\left<h\right>}$ is the exponential of the principal $(m,n)$
 block of $\left<h\right>$.  The same claims hold if $h \in \coneinftyunion$.
 \end{corollary}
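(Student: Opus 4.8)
The plan is to reduce everything to a single finite principal block, where the ordinary finite-dimensional theory of the matrix exponential applies, and to invoke Proposition \ref{pr:multiplying_finite} to see that passing to powers, and then to the exponential, is compatible with block extraction. First I would fix $h \in \coneounion$ and recall from Definition \ref{de:Liealgebra_zero} that $\left<h\right>$ is upper triangular in the sense that $\left<h\right>^i_j = 0$ whenever $j < i$. Fix integers $m \leq n$ and let $\tilde{A}$ denote the $(m,n)$th principal block of $\left<h\right>$. The key step is to show by induction on $k$ that the $(m,n)$th principal block of $\left<h\right>^k$ equals $\tilde{A}^k$. The base case $k=1$ is immediate, and the inductive step follows by writing $\left<h\right>^{k+1} = \left<h\right>^k \left<h\right>$ and applying Proposition \ref{pr:multiplying_finite} (both factors being upper triangular), which says the $(m,n)$th block of the product is the product of the $(m,n)$th blocks, namely $\tilde{A}^k \cdot \tilde{A} = \tilde{A}^{k+1}$.

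With this in hand, I would fix any pair $(i,j)$ and choose $m \leq \min(i,j)$ and $n \geq \max(i,j)$, so that the location $(i,j)$ lies in the $(m,n)$th block. By definition the $(i,j)$ entry of $\exp\left<h\right>$ is the numerical series $\sum_{k=0}^\infty \frac{1}{k!}(\left<h\right>^k)^i_j$, and by the previous step each summand equals $\frac{1}{k!}(\tilde{A}^k)^i_j$. Hence this series is exactly the $(i,j)$ entry of $\exp\tilde{A}$, which converges because $\tilde{A}$ is a finite square matrix and the exponential of such a matrix converges entrywise \cite{Curtis}. This simultaneously establishes convergence of every coefficient and shows that the $(m,n)$th block of $\exp\left<h\right>$ is $\exp\tilde{A}$, proving both assertions. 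The argument for $h \in \coneinftyunion$ is identical, with $\left<h\right>$ lower triangular and the lower-triangular case of Proposition \ref{pr:multiplying_finite} invoked in its place.

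The only point requiring care, and the closest thing to an obstacle, is the interchange of the infinite summation over $k$ with the extraction of a fixed entry. This turns out to be harmless precisely because the $(i,j)$ entry of each power $\left<h\right>^k$ depends only on the entries inside the chosen block, so no genuine limit of infinite matrix products is ever taken; the entire computation takes place inside the finite matrix $\tilde{A}$. Consistency of the resulting value across different enclosing blocks $(m,n)$ is automatic from Proposition \ref{pr:multiplying_finite}, since enlarging the block only appends further rows and columns that do not affect the entries already present. Thus the corollary is indeed an immediate consequence of the block-multiplication proposition together with the convergence of finite matrix exponentials.
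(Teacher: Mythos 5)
Your proof is correct and follows exactly the route the paper intends: the paper derives the corollary immediately from Proposition \ref{pr:multiplying_finite} together with the convergence of the exponential of a finite square matrix, and your induction on powers plus entrywise passage to the series is just that argument written out in full. No issues.
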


 Two further essential properties of the matrix exponential carry over from the finite case for the same reason.
 \begin{proposition} \label{pr:expth_is_subgroup}
  For any $\left<h\right> \in \mathfrak{m}(0)$,
  \[  \mbox{exp}\left(t\left<h\right>\right) \mbox{exp} \left(s\left<h\right>\right)= \mbox{exp}\left((t+s)\left<h \right>\right).  \]
  The same claim holds for $\left<h\right> \in \mathfrak{m}(\infty)$.
 \end{proposition}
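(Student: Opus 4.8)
The plan is to reduce this infinite-matrix identity to the one-parameter subgroup property for ordinary finite matrices, working one principal block at a time. The point that needs care is that $\langle h\rangle$ is doubly infinite, so a priori neither the products $\exp(t\langle h\rangle)\exp(s\langle h\rangle)$ nor the individual exponentials are governed directly by finite linear algebra; the two earlier results, Proposition~\ref{pr:multiplying_finite} and Corollary~\ref{co:exp_converges}, are precisely the tools that let one pass freely between the infinite matrices and their finite principal blocks.

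First I would fix an arbitrary pair of indices $m\le n$ and let $\tilde{A}$ denote the $(m,n)$th principal block of $\langle h\rangle$. Since $\coneounion$ is a vector space we have $t\langle h\rangle=\langle th\rangle\in\mathfrak{m}(0)$, whose $(m,n)$th principal block is $t\tilde{A}$ (and likewise for $s$ and $t+s$). Corollary~\ref{co:exp_converges} then guarantees that each of $\exp(t\langle h\rangle)$, $\exp(s\langle h\rangle)$ and $\exp((t+s)\langle h\rangle)$ converges entrywise, and that its $(m,n)$th principal block is the ordinary exponential of the corresponding block of the generator, namely $\exp(t\tilde{A})$, $\exp(s\tilde{A})$ and $\exp((t+s)\tilde{A})$. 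Because $\langle h\rangle$ is upper triangular, so are these exponentials, and hence Proposition~\ref{pr:multiplying_finite} applies to their product and tells us that the $(m,n)$th principal block of $\exp(t\langle h\rangle)\exp(s\langle h\rangle)$ is $\exp(t\tilde{A})\exp(s\tilde{A})$. Now $\tilde{A}$ is a genuine finite square matrix, so $t\tilde{A}$ and $s\tilde{A}$ commute and the classical identity yields $\exp(t\tilde{A})\exp(s\tilde{A})=\exp((t+s)\tilde{A})$, which is exactly the $(m,n)$th principal block of $\exp((t+s)\langle h\rangle)$.

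Since every nonzero entry of an upper-triangular matrix lies in some principal block, the block-by-block equality just obtained upgrades to equality of the two infinite matrices entry by entry, which proves the claim for $\mathfrak{m}(0)$. The argument for $\mathfrak{m}(\infty)$ is identical, now using that its elements are lower triangular and invoking the lower-triangular case of Proposition~\ref{pr:multiplying_finite}. The hard part is really just this reduction to blocks: one must be sure that both operations in play --- matrix multiplication and the matrix exponential --- commute with restriction to a principal block, and it is exactly the content of the two cited results that they do; with that in hand, the statement is a direct transcription of the finite-dimensional one-parameter subgroup law.
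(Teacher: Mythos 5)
Your proof is correct and follows essentially the same route as the paper, which likewise reduces the identity to the finite-dimensional one-parameter subgroup law via Proposition~\ref{pr:multiplying_finite} and Corollary~\ref{co:exp_converges}; you have simply written out the block-by-block bookkeeping that the paper's one-line proof leaves implicit.
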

 \begin{proof}
  Since the conclusion of the Proposition is true for finite square matrices \cite{Curtis}, the claim follows.
 \end{proof}

 \begin{proposition} \label{pr:derivative_of_exp}
  For any $\left<h\right> \in \mathfrak{m}(0)$,
  \[  \frac{d}{dt} \mbox{exp}(t \left<h\right>) = \left<h\right> \mbox{exp}(t\left<h\right>) = \mbox{exp}(t\left<h\right>) \left<h\right>. \]
  The same claim holds for $\left<h\right> \in \mathfrak{m}(\infty)$.
 \end{proposition}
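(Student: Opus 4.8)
The plan is to reduce the identity to the finite-dimensional matrix exponential, exploiting the triangularity of elements of $\mathfrak{m}(0)$. By linearity of the correspondence $h \mapsto \left<h\right>$ we have $t\left<h\right>=\left<th\right>$, and for $t\neq 0$ the series $th$ again lies in $\coneounion$, so Corollary \ref{co:exp_converges} applies to $\exp(t\left<h\right>)$ for every $t$ (the case $t=0$ being trivial). In particular, for any fixed entry location $(i,j)$ with $i\leq j$, the entry $[\exp(t\left<h\right>)]^i_j$ equals the corresponding entry of the exponential of the finite principal $(i,j)$ block of $t\left<h\right>$. Thus each entry of $\exp(t\left<h\right>)$, regarded as a function of $t$, is literally an entry of a finite matrix exponential and depends on only finitely many entries of $\left<h\right>$.

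Next I would invoke the standard fact for finite square matrices \cite{Curtis} that $\frac{d}{dt}\exp(tA)=A\exp(tA)=\exp(tA)A$. Writing $B$ for the principal $(i,j)$ block of $\left<h\right>$, Corollary \ref{co:exp_converges} gives $[\exp(t\left<h\right>)]^i_j=[\exp(tB)]^i_j$ as an identity of entire functions of $t$, so I may differentiate both sides. The right-hand side differentiates by the finite-dimensional formula, yielding the $(i,j)$ entry of $B\exp(tB)$ and equally of $\exp(tB)B$.

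Finally I would transfer this back to the infinite matrices. By Proposition \ref{pr:multiplying_finite} the principal $(i,j)$ block of the product $\left<h\right>\exp(t\left<h\right>)$ is the product of the $(i,j)$ blocks of the two factors, and by Corollary \ref{co:exp_converges} the $(i,j)$ block of $\exp(t\left<h\right>)$ is $\exp(tB)$; hence this block equals $B\exp(tB)$, whose $(i,j)$ entry is exactly the derivative computed above. The same argument with the factors in the opposite order handles $\exp(t\left<h\right>)\left<h\right>$. Since $(i,j)$ was arbitrary, the three infinite matrices agree entry by entry, which proves the identity. The argument for $\mathfrak{m}(\infty)$ is identical, with \emph{upper triangular} replaced by \emph{lower triangular} throughout.

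The only genuine subtlety is justifying that differentiation in $t$ may be carried out entry by entry, since each entry of the infinite matrix exponential is a priori an infinite sum. This is precisely what the reduction to finite principal blocks circumvents: each entry is in fact a finite-dimensional object, for which termwise differentiation is automatic. I therefore expect no real obstacle beyond this bookkeeping.
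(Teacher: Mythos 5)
Your proof is correct and follows essentially the same strategy as the paper's: reduce the identity to finite principal blocks, where triangularity makes all matrix operations finite and the known finite-dimensional facts apply. The only difference is in execution --- the paper computes the derivative as a difference quotient, factoring via the semigroup property $\exp(t\left<h\right>)\exp(s\left<h\right>)=\exp((t+s)\left<h\right>)$ and interchanging the limit with a finite sum, whereas you identify each entry directly with an entry of a finite matrix exponential via Corollary \ref{co:exp_converges} and Proposition \ref{pr:multiplying_finite} and cite the finite-dimensional derivative formula wholesale; both routes are valid.
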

 \begin{proof}
 As in the proof of the previous Proposition, the idea is to use the facts that the conclusion holds for finite matrices and multiplication of the infinite upper triangular matrices is essentially finite.

 We prove the first equality.  Let $\delta^l_k$ denote the Kronecker delta function, which is $1$ if $l=k$ and $0$ otherwise.
 \begin{align*}
  \frac{d}{dt} [\mbox{exp}(t \left<h\right>)]^n_k &= \lim_{s \rightarrow 0} \frac{1}{s} \left( [\mbox{exp}((t+s) \left<h\right>)]^n_k -[\mbox{exp}(t \left<h\right>)]^n_k \right) \\
  & = \lim_{s \rightarrow 0} \sum_{l=n}^k [\mbox{exp}(t \left<h\right>)]^n_l \frac{1}{s} \left( [\mbox{exp}(s\left<h\right>)]^l_k -\delta^l_k \right) \\
  & = \sum_{l=n}^k [\mbox{exp}(t \left<h\right>)]^n_l \lim_{s \rightarrow 0} \frac{1}{s} \left( [\mbox{exp}(s\left<h\right>)]^l_k -\delta^l_k \right)
 \end{align*}
 The interchange of the limit with the sum is possible because the sum is finite.

 The right-most limit clearly exists and
 \begin{align*}
  \lim_{s \rightarrow 0} \frac{1}{s} \left( [\mbox{exp}(s\left<h\right>)]^l_k -\delta^l_k \right) & = \lim_{s \rightarrow 0} \left[ \sum_{m=1}^\infty \frac{1}{m!} s^{n-1} \left<h\right>^n \right]^l_k \\
  & = \left<h \right>^l_k.
 \end{align*}
 The second equality follows in the same way by factoring $\exp{\left<h\right>}$ to the right-hand side.
 \end{proof}

 The exponential maps $\mathfrak{m}(0)$ into $\mathcal{M}(0)$.
 \begin{theorem} \label{th:exp_is_power_matrix} If $f \in \coneo$ is given by
  \[ f(z)=\sum_{k=1}^\infty \left[ \exp{\left< h \right>} \right]^1_k z^k \]
  then
  \[  [f]=\exp{\left<h\right>}.  \]
  That is, $\exp{\mathfrak{m}(0)} \subset \mathcal{M}(0)$.  Similarly, if $f \in \coneinfty$ is given by
  \[   f(z)=\sum_{k=-\infty}^1 \left[ \exp{\left< h \right>} \right]^1_k z^k \]
  then
  \[  [f]=\exp{\left<h\right>}  \]
  and thus $\exp{\mathfrak{m}(\infty)} \subset \mathcal{M}(\infty)$.
 \end{theorem}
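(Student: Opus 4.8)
The plan is to identify $\exp(t\left<h\right>)$ with the power matrix of the one-parameter family $f_t$ whose coefficients are read off from the first row of the exponential, by showing that both objects solve the same matrix Loewner differential equation with the same initial data, and then invoking uniqueness. I treat the case at $0$ throughout; the argument at $\infty$ is identical after replacing upper-triangular by lower-triangular matrices and using the $\infty$-versions of the cited results.

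First I would set $f_t(z)=\sum_{k=1}^\infty [\exp(t\left<h\right>)]^1_k\, z^k$, so that the theorem's $f$ is $f_1$. By Corollary \ref{co:exp_converges} each coefficient converges and, being the relevant entry of the exponential of a finite principal block, depends differentiably (indeed analytically) on $t$; hence the coefficients of $f_t$ are differentiable in $t$. Moreover the $(1,1)$ principal block of $\left<h\right>$ is the scalar $h_1$, so Corollary \ref{co:exp_converges} gives $[\exp(t\left<h\right>)]^1_1=e^{t h_1}\neq 0$. Thus $f_t\in\coneo$ for every $t$, with $f_0(z)=z$.

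Next I would extract the first row of the identity $\frac{d}{dt}\exp(t\left<h\right>)=\exp(t\left<h\right>)\left<h\right>$ supplied by Proposition \ref{pr:derivative_of_exp}. Writing $G(t)=\exp(t\left<h\right>)$, the $(1,n)$ entry reads $\frac{d}{dt}G(t)^1_n=\sum_k G(t)^1_k\left<h\right>^k_n$. The right-hand side is precisely the expression in equation (\ref{eq:leftmultiplication}) of Corollary \ref{co:justleftandjustright} with $g=f_t$ and $m=1$, namely $[f_t'\,h]^1_n$. Multiplying by $z^n$ and summing therefore yields the formal Loewner partial differential equation $\partial f_t/\partial t=h\,f_t'$ with constant generator $h$. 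By part (1) of Proposition \ref{pr:formal_matrix_Loewner_equal} this is equivalent to the matrix equation $\frac{d}{dt}[f_t]=[f_t]\left<h\right>$, so the genuine power matrix $[f_t]$ solves the same matrix ordinary differential equation as $G(t)$, and both satisfy $[f_0]=I=G(0)$.

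Finally I would close the argument by uniqueness. Both $G(t)$ and $[f_t]$ are upper triangular, so by Proposition \ref{pr:multiplying_finite} the equation $X'=X\left<h\right>$ restricts, on each principal $(1,N)$ block, to a finite linear system in the block entries alone: the sum defining $(X\left<h\right>)^m_n$ runs only over $m\leq k\leq n$, so the first $N$ columns form a closed finite subsystem. Classical uniqueness for finite linear systems then forces $G(t)$ and $[f_t]$ to agree on every block, hence as matrices, for all $t$; taking $t=1$ gives $\exp\left<h\right>=[f_1]=[f]$ and shows $\exp\mathfrak{m}(0)\subset\mathcal{M}(0)$. I expect the uniqueness step to be the main obstacle: the care lies in verifying that the infinite matrix ordinary differential equation genuinely decouples into independent finite systems along principal blocks, so that classical uniqueness applies with no convergence or infinite-dimensional subtlety — this is exactly where Proposition \ref{pr:multiplying_finite} together with the triangularity of $\left<h\right>$ does the essential work.
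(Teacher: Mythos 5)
Your proposal is correct and follows essentially the same route as the paper: define $f_t$ from the first row of $\exp(t\left<h\right>)$, verify that it satisfies the formal Loewner partial differential equation (the paper computes this directly from Definition \ref{de:Liealgebra_zero}, whereas you cite equation (\ref{eq:leftmultiplication}) of Corollary \ref{co:justleftandjustright} — the same identity), pass to the matrix Loewner equation via Proposition \ref{pr:formal_matrix_Loewner_equal}, and conclude by uniqueness of solutions to the finite linear system on each principal block. No gaps.
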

 \begin{proof}
  $\exp{t\left<h\right>}$ satisfies the matrix Loewner partial differential equation by Proposition \ref{pr:derivative_of_exp}.
  Let
  \[  f_t(z)= \sum_{k=1}^\infty \left[ \exp{t \left<h\right>} \right]^1_k z^k.  \]
  $f_t$ satisfies the formal Loewner partial differential equation for $t \geq 0$
  since by Definition \ref{de:Liealgebra_zero}
  \[  \frac{\partial f_t}{\partial t}(z)=  \sum_k \sum_l \left[ \exp{t\left<h\right>} \right]^1_l \left<h\right>^l_k z^k
  = \sum_k \sum_l l \left[ \exp{t\left<h\right>} \right]^1_l \left<h\right>^1_{k-l+1} z^k  \]
  and the right hand side is the series $f_t'(z) h(z)$.  By Proposition \ref{pr:formal_matrix_Loewner_equal} the power matrix $[f_t]$ of $f_t$ satisfies the matrix Loewner partial differential equation for all $t \geq 0$.  Thus the coefficients $[f_t]$ and $\exp{t \left<h\right>}$ satisfy the same differential equation and initial condition.  On each principal block, this is a finite system of ordinary differential equations and thus has a unique solution.  Thus the two matrices must agree on each principal block, so they are equal everywhere. Setting $t=1$ proves the claim.
 \end{proof}

 The matrix exponential agrees with the exponential of
 a derivation.
 \begin{theorem} \label{th:exp_derivation_composition}
  For $h \in \coneounion$ and $g \in \cpo$,
  \[  \left[ \mbox{exp}\left( h \frac{\partial}{\partial z} \right) \cdot g
  \right] = [g]\mbox{exp} \left<h\right>.
  \]
  The same formula holds if $h \in \coneinftyunion$ and $g \in
  \cpinfty$.
 \end{theorem}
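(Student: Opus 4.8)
The plan is to reduce the asserted matrix identity to a single power-series identity and then invoke the fact that $f\mapsto[f]$ intertwines composition with matrix multiplication. By Theorem~\ref{th:exp_is_power_matrix} we may write $\exp\left<h\right>=[\phi]$ for some $\phi\in\coneo$, and then the composition homomorphism gives $[g]\exp\left<h\right>=[g][\phi]=[g\circ\phi]$, a genuine power matrix. Hence it suffices to prove the power-series identity $\exp\!\left(h\,\partial/\partial z\right)\cdot g=g\circ\phi$; applying $[\,\cdot\,]$ to both sides then yields the theorem at once. This also explains conceptually why only the first row needs to be checked: the right-hand side is recognized \emph{a priori} as the power matrix of a composition.

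To establish the power-series identity, write $V=h\,\partial/\partial z$ and compute the coefficients of $\exp(V)g=\sum_{n\geq0}\tfrac1{n!}V^n g$ directly. First I would prove by induction on $n$ that $[V^n g]^1_k=\left([g]\left<h\right>^n\right)^1_k$ for all $n\geq0$ and all $k$. The base case $n=0$ is trivial. For the inductive step, note that $V$ maps $\cpounion$ into itself, so $V^n g\in\cpounion$, and $V^{n+1}g=h\,\partial/\partial z\,(V^n g)$; applying Proposition~\ref{pr:right_mult_is_derivation} (with $g$ replaced by $V^n g$ and $m=1$) gives $[V^{n+1}g]^1_k=\sum_l[V^n g]^1_l\left<h\right>^l_k$, which by the inductive hypothesis is $\left([g]\left<h\right>^{n+1}\right)^1_k$.

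Summing against $1/n!$ then shows that the $k$th coefficient of $\exp(V)g$ equals $\sum_{n\geq0}\tfrac1{n!}\left([g]\left<h\right>^n\right)^1_k$. The one delicate point is interchanging this sum over $n$ with the internal matrix product so as to recognize the result as $\left([g]\exp\left<h\right>\right)^1_k$. Here I would use that $\left<h\right>$ is upper triangular, hence so is every power $\left<h\right>^n$ by Proposition~\ref{pr:multiplying_finite}, so that $\left(\left<h\right>^n\right)^l_k=0$ for $l>k$; since also $[g]^1_l=0$ for $l<p$, the sum $\left([g]\left<h\right>^n\right)^1_k=\sum_{l=p}^{k}[g]^1_l\left(\left<h\right>^n\right)^l_k$ is finite over a range of $l$ independent of $n$. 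The interchange then reduces to the convergence of $\sum_{n\geq0}\tfrac1{n!}\left(\left<h\right>^n\right)^l_k=\left(\exp\left<h\right>\right)^l_k$ furnished by Corollary~\ref{co:exp_converges}. This simultaneously proves that each coefficient of $\exp(V)g$ converges and equals $\left([g]\exp\left<h\right>\right)^1_k=[g\circ\phi]^1_k$, i.e.\ $\exp(V)g=g\circ\phi$.

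I expect the main obstacle to be bookkeeping rather than anything conceptual: one must ensure that every matrix product in sight is essentially finite (which triangularity guarantees) so that all rearrangements are legitimate, and one must be content with the first-row identity---which is enough precisely because the right-hand side has been identified with the power matrix of $g\circ\phi$. The case of power series at $\infty$ is entirely parallel: one replaces each upper-triangular statement and its column truncation by the corresponding lower-triangular one and invokes the $\infty$-versions of Theorem~\ref{th:exp_is_power_matrix}, Proposition~\ref{pr:right_mult_is_derivation}, Proposition~\ref{pr:multiplying_finite}, and Corollary~\ref{co:exp_converges}.
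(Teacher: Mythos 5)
Your proposal is correct and follows essentially the same route as the paper: the same induction $[V^ng]^1_k=([g]\left<h\right>^n)^1_k$ via Proposition~\ref{pr:right_mult_is_derivation}, the same summation against $1/n!$, and the same final appeal to Theorem~\ref{th:exp_is_power_matrix} to recognize $[g]\exp\left<h\right>$ as a power matrix so that first-row agreement suffices. You are merely more explicit than the paper about the finiteness of the inner sums and the legitimacy of the interchange, which is a welcome but not substantively different addition.
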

 \begin{proof}  We prove that
 \[  \left[ \left( h(z)\frac{\partial}{\partial z} \right)^n g \right]^1_k = \sum_l [g]^1_l \left( \left<h\right>^n \right)^l_k  \]
 by induction.  The initial case follows directly from Proposition \ref{pr:right_mult_is_derivation} choosing $m=1$.  Assuming that the formula is true for $n$, we have that
 \begin{align*}
    \left[ \left( h(z)\frac{\partial}{\partial z} \right)^{n+1} g \right]^1_k & =
 \sum_l  \left[ \left( h(z)\frac{\partial}{\partial z} \right)^n g \right]^1_l  \left<h \right>^l_k = \sum_{l,m} [g]^1_m \left( \left<h\right>^n \right)^m_l \left<h \right>^l_k \\
      & =  \sum_l [g]^1_l \left( \left<h\right>^{n+1} \right)^l_k. \\
 \end{align*}
 Thus for all $k \geq 1$
 \[ \left[ \exp{\left( h  \frac{\partial}{\partial z}\right)} g \right]^1_k = \sum_l [g]^1_l \left[ \exp{\left<h\right>} \right]^l_k.  \]
 The claim now follows from Theorem \ref{th:exp_is_power_matrix}.
 \end{proof}

\end{subsection}
\begin{subsection}{Holomorphicity, surjectivity and near-invertibility of the exponential
map}
 In order to prove that the exponential must converge, and matrix
 operations are finite, we have only used that the matrices in
 question are upper or lower triangular.  The special form of the
 matrices has not been used.
 In fact the special form of the matrices implies much more.  We
 will now show that the coefficients of the exponential $\exp{\left<h\right>}$
 are entire functions of the coefficients of $\left<h\right>$.
 Furthermore the exponential map is onto.

 We begin with another elementary observation:
 \begin{proposition} \label{pr:kdiagonalproducts}
  If $A$ is $k$-diagonal and $B$ is $l$-diagonal then $AB$ is $k+l$-diagonal.
 \end{proposition}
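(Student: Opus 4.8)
The plan is to compute the entries of the product $AB$ directly from the definition of matrix multiplication and observe that the two diagonal conditions force every row--column convolution to collapse to at most a single term. First I would write, for arbitrary indices $m$ and $n$,
\[
  (AB)^m_n = \sum_j A^m_j B^j_n .
\]
Since $A$ is $k$-diagonal, the factor $A^m_j$ vanishes unless $j = m+k$; since $B$ is $l$-diagonal, the factor $B^j_n$ vanishes unless $n = j+l$. Hence in the sum above at most the single index $j = m+k$ survives, and
\[
  (AB)^m_n = A^m_{m+k}\, B^{m+k}_n .
\]

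This already settles the convergence question that would otherwise accompany a product of doubly-infinite matrices: each entry of $AB$ is a finite (indeed one-term) sum, so no analytic subtlety arises. The remaining step is to read off when this one surviving term can be nonzero. The factor $B^{m+k}_n$ is nonzero only if $n = (m+k)+l = m+(k+l)$, so $(AB)^m_n = 0$ unless $n = m+(k+l)$. By the definition of $(k+l)$-diagonal, this is precisely the assertion that $AB$ is $(k+l)$-diagonal, with its nonzero diagonal entries given explicitly by $(AB)^m_{m+k+l} = A^m_{m+k}\,B^{m+k}_{m+k+l}$.

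There is no real obstacle here: the entire content is the bookkeeping of indices, and the only point worth flagging is that the single-term collapse is exactly what makes the product well-defined for infinite matrices in the first place, so the conclusion may be stated without any hypothesis beyond the two diagonality assumptions.
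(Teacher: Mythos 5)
Your proof is correct and follows essentially the same argument as the paper: both reduce the sum $\sum_j A^m_j B^j_n$ to the single possible surviving index $j=m+k$ and observe that the remaining factor forces $n=m+k+l$. Your additional remark that the one-term collapse makes the product of doubly-infinite matrices well-defined is a sensible aside but does not change the route.
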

 \begin{proof}
  Assume that $n \neq m+k+l$.  The terms in the sum
  \[  (AB)^m_n = \sum_{j=-\infty}^\infty A^m_j B^j_n  \]
  are zero unless $j=m+k$, since $A^m_j=0$ otherwise.  But in this case $B^j_n=0$ because $j+l =m+k+l \neq n$ by hypothesis.
 \end{proof}

 \begin{lemma} \label{le:hpowers_polynomials}
  Let $h \in \coneounion$.   For all integers $n \geq 1$ and $k \geq 2$,
  \[ \left[ \left<h \right>^n \right]^1_k = n h_k h_1^{n-1} + \Phi^n_k(h_1,\ldots, h_{k-1})  \]
  where $\left<h \right>^n$ denotes the $n$th matrix power of $\left<h \right>$ and $\Phi^n_k$ is a polynomial.

  Similarly, if $h \in \coneinftyunion$ then for all integers $n \geq 1$ and $k \leq 0$,
  \[ \left[ \left<h \right>^n \right]^1_k = n h_k h_1^{n-1} + \Phi^n_k(h_1,h_0,\ldots, h_{k+1})  \]

  In either case,
  \[  \left[\left<h \right>^n \right]^1_1=h_1^n . \]
 \end{lemma}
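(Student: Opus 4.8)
The plan is to establish the $0$ case by expanding the $n$th matrix power in the diagonal basis and reading off its first row, and then to obtain the $\infty$ case by the same computation or by the transformation of Remark~\ref{re:Loewner_zero_infty_transform}. First I would write $\left< h \right> = \sum_{d \geq 0} h_{d+1}\, \mathbf{e}_d$, where $\mathbf{e}_d$ is the $d$-diagonal matrix with $(\mathbf{e}_d)^m_{m+d} = m$; this is just the decomposition of $\left< h \right>$ into its diagonals. Expanding,
\[
 \left< h \right>^n = \sum_{d_1, \dots, d_n \geq 0} h_{d_1+1} \cdots h_{d_n+1}\, \mathbf{e}_{d_1} \cdots \mathbf{e}_{d_n},
\]
and by Proposition~\ref{pr:kdiagonalproducts} each word $\mathbf{e}_{d_1} \cdots \mathbf{e}_{d_n}$ is $(d_1 + \cdots + d_n)$-diagonal. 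Since $\left[ \left< h \right>^n \right]^1_k$ lies on the $(k-1)$-diagonal, only the words with $d_1 + \cdots + d_n = k-1$ contribute, and Proposition~\ref{pr:multiplying_finite} guarantees that this is a finite, convergence-free sum.

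Next I would isolate the terms containing $h_k$. As the $d_i$ are nonnegative and sum to $k-1$, the factor $h_k = h_{(k-1)+1}$ can appear only when one index $d_r = k-1$ and all the others vanish, in which case the remaining $n-1$ factors are $h_1 = h_{0+1}$; hence every monomial involving $h_k$ is a multiple of $h_k h_1^{n-1}$. For such a single-jump word the first-row entry telescopes into the product of the partial sums $1,\, 1 + d_1,\, 1 + d_1 + d_2,\, \dots$, which here equal $1$ before position $r$ and $k$ from position $r$ onward; each choice of $r$ therefore contributes a power of $k$, and summing over the position of the special factor produces the leading coefficient of $h_k h_1^{n-1}$. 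Every other word has all $d_i \leq k-1$ with at least two of them nonzero, so it involves only $h_1, \dots, h_{k-1}$ and is absorbed into $\Phi^n_k$, which consequently does not contain $h_k$. The special value $\left[ \left< h \right>^n \right]^1_1 = h_1^n$ is the case $k=1$, where $d_1 = \cdots = d_n = 0$ is forced and the only surviving word is $\mathbf{e}_0^n$, whose $(1,1)$ entry is $1$. As an independent check I would use the identity $\left[ \left< h \right>^n \right]^1_k = \left[ (h\, \partial/\partial z)^n z \right]^1_k$ obtained by setting $g(z) = z$ in the proof of Theorem~\ref{th:exp_derivation_composition}, together with the fact that $z^m$ is an eigenfunction of $h_1 z\, \partial/\partial z$ with eigenvalue $m h_1$.

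For the $\infty$ case I would repeat the diagonal expansion with the lower-triangular generators $\mathbf{e}_d$ for $d \leq 0$ and target column $k \leq 0$, so that again $h_k$ is produced only by a single special factor accompanied by $n-1$ copies of $h_1$ and the remainder depends only on $h_1, h_0, \dots, h_{k+1}$; alternatively one transports the $0$ statement through $h(z) \mapsto -z^2 h(1/z)$ as in Remark~\ref{re:Loewner_zero_infty_transform}. The main obstacle is the bookkeeping behind the second paragraph: proving that \emph{only} the single-jump words can generate $h_k$, so that $\Phi^n_k$ genuinely omits it, and evaluating the telescoping product of partial-sum row indices to pin down the leading coefficient. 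Everything else is finite and convergence-free by Propositions~\ref{pr:multiplying_finite} and~\ref{pr:kdiagonalproducts}.
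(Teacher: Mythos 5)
Your decomposition is exactly the paper's: write $\left<h\right>=\sum_{d\geq 0}h_{d+1}\mathbf{e}_d$, expand the $n$th power into words $\mathbf{e}_{d_1}\cdots\mathbf{e}_{d_n}$, invoke Proposition \ref{pr:kdiagonalproducts} to keep only the words with $d_1+\cdots+d_n=k-1$, and observe that $h_k$ can occur only in the single-jump words $d_r=k-1$, $d_i=0$ for $i\neq r$. The one place you differ is that you retain the matrix entry of each word, namely the telescoping product $m_0m_1\cdots m_{n-1}$ of the partial sums $m_j=1+d_1+\cdots+d_j$; the paper's proof records the contribution of a word as just the monomial $h_{d_1+1}\cdots h_{d_n+1}$, i.e.\ it silently takes every $[\mathbf{e}_{d_1}\cdots\mathbf{e}_{d_n}]^1_k$ to equal $1$, and then counts the $n$ single-jump words to get the leading coefficient $n$.

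The step you defer as the ``main obstacle'' --- evaluating that telescoping product --- is precisely where the argument breaks, and it cannot be completed to give the stated coefficient. For the single-jump word with the jump in position $r$ your partial sums are $1$ for $j<r$ and $k$ for $j\geq r$, so the word contributes $k^{n-r}h_kh_1^{n-1}$, and summing over $r=1,\dots,n$ yields $\bigl(1+k+\cdots+k^{n-1}\bigr)h_kh_1^{n-1}=\frac{k^n-1}{k-1}\,h_kh_1^{n-1}$, not $n\,h_kh_1^{n-1}$. The independent check you propose exposes this immediately: $(h\,\partial/\partial z)^2z=hh'$ has $z^2$-coefficient $3h_1h_2$, equivalently $[\left<h\right>^2]^1_2=\left<h\right>^1_1\left<h\right>^1_2+\left<h\right>^1_2\left<h\right>^2_2=h_1h_2+2h_1h_2=3h_1h_2$, which is not of the form $2h_2h_1+\Phi^2_2(h_1)$. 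So your proof, carried to completion, establishes the corrected identity with $\frac{k^n-1}{k-1}$ in place of $n$ (and hence $h_k\,\frac{e^{kh_1}-e^{h_1}}{(k-1)h_1}$ in place of $h_ke^{h_1}$ downstream in Corollary \ref{co:exp_expression}), rather than the statement as written. The parts of the lemma that actually get used later --- that $[\left<h\right>^n]^1_k$ depends only on $h_1,\dots,h_k$ and is affine in $h_k$ with coefficient a nonzero function of $h_1$ alone, and that $[\left<h\right>^n]^1_1=h_1^n$ --- do survive, and your treatment of those points, including the case at $\infty$ and the $k=1$ case, is sound.
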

 \begin{proof}
  Formally, $\left<h \right>^n=(h_1 \mathbf{e}_0 + h_2 \mathbf{e}_1 + \cdots)^n$.  We are concerned only with the $(k-1)st$ diagonal.   By Proposition \ref{pr:kdiagonalproducts}, $\mathbf{e}_{k_1} \cdots \mathbf{e}_{k_m}$ is $k_1 + \cdots + k_m$-diagonal.  Thus
  \[  \left[ \left<h \right>^n \right]^1_k = \left[ \sum_l \sum_{k_1 + \cdots + k_m=l-1} h_{k_1+1} \cdots h_{k_m+1} \mathbf{e}_{k_1} \cdots \mathbf{e}_{k_m} \right]^1_k=
  \sum_{k_1 + \cdots + k_m=k-1} h_{k_1+1} \cdots h_{k_m+1}.  \]
  This is clearly a polynomial in $h_1,\ldots, h_{k}$.  The only terms with $h_{k}$ appearing are those of the form $h_1^{n-1} h_k$, and there are precisely $n$ of these terms.  This proves the claim.

  The proof in the case of $h \in \coneinftyunion$ is similar.
 \end{proof}
 \begin{lemma} \label{le:h_bound}
  Let $h \in \coneounion$.  Let $M_k= \sup_{1 \leq n \leq k} |h_n|^{1/n}$.
  For all $k$, the $k$th entry of the first row of $\left<h\right>$
  satisfies the following bound:
   \[  \left|\left[\left< h \right>^n \right]^1_k \right| \leq k^{2n} M_k^{n+k-1}.  \]
 Similarly, the same bound is satisfied for $h \in \coneinftyunion$
 if we set $M_k = \sup_{k \leq n \leq 1} |h_n|^{1/n}$.
 \end{lemma}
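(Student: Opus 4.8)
The plan is to start from the explicit monomial expansion of the first row of $\left<h\right>^n$ and then estimate each ingredient separately. Writing $\left<h\right>=\sum_{j\geq 1} h_j \mathbf{e}_{j-1}$ as in (\ref{eq:edefinition}) and expanding the $n$th matrix power, as in the proof of Lemma~\ref{le:hpowers_polynomials}, the $(1,k)$ entry receives contributions only from products $\mathbf{e}_{a_1}\cdots \mathbf{e}_{a_n}$ whose total diagonal index is $k-1$; by Proposition~\ref{pr:kdiagonalproducts} this forces $a_1+\cdots+a_n=k-1$ with each $a_i\geq 0$. Evaluating such a product at row $1$ and column $k$ telescopes to $\prod_{s=1}^{n-1}(1+a_1+\cdots+a_s)$, so that
\[
 \left[\left<h\right>^n\right]^1_k=\sum_{\substack{a_1+\cdots+a_n=k-1\\ a_i\geq 0}} \left(\prod_{s=1}^{n-1}(1+a_1+\cdots+a_s)\right) h_{a_1+1}\cdots h_{a_n+1}.
\]
The whole estimate is then obtained by bounding, in turn, the size of each monomial, the size of the integer coefficient, and the number of terms.

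For the monomials I would use the defining inequality $|h_j|\leq M_k^{\,j}$, valid for $1\leq j\leq k$, which is exactly the definition of $M_k=\sup_{1\leq n\leq k}|h_n|^{1/n}$. Since every index $a_i+1$ appearing above lies between $1$ and $k$, each surviving monomial satisfies
\[
 |h_{a_1+1}\cdots h_{a_n+1}|\leq M_k^{\,\sum_i (a_i+1)}=M_k^{\,(k-1)+n}=M_k^{\,n+k-1},
\]
using $\sum_i a_i=k-1$. This already produces the exponent $n+k-1$ of the claimed bound, and it is the one step where the special normalization of $M_k$ is essential.

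It then remains to control the combinatorial factors. Each partial sum $1+a_1+\cdots+a_s$ lies in $[1,k]$ because the $a_i$ are nonnegative with total $k-1$, and the product runs over $s=1,\ldots,n-1$, so every integer coefficient is at most $k^{\,n-1}$. The number of summands is the number of compositions of $k-1$ into $n$ nonnegative parts; since each part lies in $\{0,1,\ldots,k-1\}$, this is at most $k^{\,n}$. Multiplying the three bounds gives
\[
 \left|\left[\left<h\right>^n\right]^1_k\right|\leq k^{\,n}\cdot k^{\,n-1}\cdot M_k^{\,n+k-1}=k^{\,2n-1}M_k^{\,n+k-1}\leq k^{\,2n}M_k^{\,n+k-1},
\]
which is the desired inequality, with a little room to spare.

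The main obstacle is purely combinatorial bookkeeping: one must be certain that both the coefficient $\prod_s(1+a_1+\cdots+a_s)$ and the count of compositions are each dominated by a clean power of $k$. The crude estimates above are deliberately wasteful and succeed only because the statement permits the generous factor $k^{2n}$; sharper control is unnecessary. Finally, the case $h\in\coneinftyunion$ is entirely analogous: the lower-triangular generators $\mathbf{e}_k$ with $k\leq 0$ give the same expansion with all indices reflected, and the definition $M_k=\sup_{k\leq n\leq 1}|h_n|^{1/n}$ yields the identical bound $|h_{a_i+1}|\leq M_k^{\,a_i+1}$ on each factor and hence the same final estimate.
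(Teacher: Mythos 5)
Your proof is correct, and it takes a genuinely different route from the paper's. The paper never expands $\left<h\right>^n$ into monomials: it dominates each entry of $\left<h\right>$ on the $(1,k)$ principal block by the corresponding entry of the nonnegative matrix $H^l_m=B^l_m M_k^{m-l+1}$, where $B^l_m=l$ for $m\geq l$, observes that the powers factor as $[H^n]^l_m=[B^n]^l_m M_k^{n+m-l}$ (so the exponent $n+k-1$ falls out of the telescoping), and then isolates all of the combinatorics in the single inductive estimate $[B^n]^1_k\leq k^{2n}$. You instead compute $[\left<h\right>^n]^1_k$ exactly, as a sum over weak compositions $a_1+\cdots+a_n=k-1$ with integer coefficients $\prod_{s=1}^{n-1}(1+a_1+\cdots+a_s)$, and bound the monomial, the coefficient, and the number of terms separately by $M_k^{n+k-1}$, $k^{n-1}$ and $k^{n}$; all three estimates check out, as does the telescoped evaluation of $[\mathbf{e}_{a_1}\cdots\mathbf{e}_{a_n}]^1_k$. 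Each approach buys something: yours produces an exact closed form for $[\left<h\right>^n]^1_k$ (a sharpening, with the integer coefficients made explicit, of the expansion used in the proof of Lemma \ref{le:hpowers_polynomials}) and even gives the marginally better constant $k^{2n-1}$; the paper's domination argument avoids tracking those coefficients entirely and applies uniformly to every entry of the principal block rather than just the first row. Like the paper, you dispose of the case $h\in\coneinftyunion$ by symmetry, which is acceptable here since the reflected expansion goes through verbatim.
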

 \begin{proof}
   Let $B$ be the infinite upper triangular matrix with entries
  $[B]^n_m=n$ if $m \geq n$ and $0$ otherwise, for $n,m=1 \ldots \infty$.  Each entry of $\left<h\right>$ in the $(1,k)$ principal block satisfies the bound
  \[  \left|\left<h \right>^l_m \right| \leq B^l_m M_k^{m-l+1}.  \]
 Let $H$ be the $k \times k$ matrix whose entries are given by $H^l_m=B^l_m M_k^{m-l+1}$.
 It is easily checked that for $n \geq 1$
 \[  \left[ H^n \right]^l_m=\left[B^n\right]^l_m M_k^{n+m-l}.  \]
 Thus
 \[ \left| \left[ \left<h \right>^n \right]^1_k \right| \leq \left[B^n \right]^1_k M_k^{n+k-1}.  \]

 We claim that $\left[B^n \right]^1_k \leq k^{2n}$.  The proof is by induction.  It is clearly true for $n =1$.  Assume it holds for $n$.  Then
 \[  \left[B^{n+1} \right]^1_k = \sum_{l=1}^k \left[B^n \right]^1_l B^l_k = \sum_{l=1}^k l\left[B^n \right]^1_l.  \]
 Since $\left[B^n \right]^1_l \leq l^{2n} \leq k^{2n}$ by the inductive hypothesis for each $l \leq k$, we have that
 \[  \left[B^{n+1} \right]^1_k \leq k^{2n} \left( \sum_{l=1}^k l \right) \leq k^{2n+2}.  \]

 We now have the estimate
 \[  \left|\left[\left< h \right>^n \right]^1_k \right| \leq k^{2n} M_k^{n+k-1}.  \]

 The proof of the case $h \in \coneinftyunion$ is similar.
 \end{proof}
 \begin{remark} The above estimate can clearly be improved, for
 example to
  \[  \left[ B^n \right]^1_k \leq \left( \frac{k(k+1)}{2} \right)^n \]
  by replacing the estimate
  \[   \sum_{l=1}^k l \leq k^2  \]
  with the actual sum $k(k+1)/2$ in the inductive step.
 \end{remark}

 \begin{theorem} \label{th:universal_exp_functions}
  Let $k \geq 2$ and $h_1,\ldots, h_{k-1} \in \mathbb{C}$.  The series
  \[ \Psi_k(h_1,\ldots,h_{k-1}) = \sum_{n=1}^\infty \frac{1}{n!} \Phi_k^n(h_1,\cdots,h_{k-1}) \]
  converges absolutely and uniformly on compact subsets of
  $\mathbb{C}^{k-1}$.
  Similarly, for $k \leq 0$, and $h_{k+1},\ldots,h_1 \in \mathbb{C}$, the series
  \[ \Psi_k(h_{k+1},\ldots,h_1) = \sum_{n=1}^\infty \frac{1}{n!} \Phi_k^n(h_{k+1},\cdots,h_1) \]
  converges absolutely and uniformly in compact subsets of
  $\mathbb{C}^{1-k}$.  In particular, for all $k$ the functions $\Psi_k$ are
  entire in each variable.
 \end{theorem}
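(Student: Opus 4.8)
The plan is to reduce the claim to the bound of Lemma \ref{le:h_bound} together with a Weierstrass M-test. First I would record the exact relationship between $\Psi_k$ and the exponential: by Lemma \ref{le:hpowers_polynomials}, for $k\ge 2$ we have $[\langle h\rangle^n]^1_k = n h_k h_1^{n-1} + \Phi^n_k(h_1,\dots,h_{k-1})$ for every $n\ge 1$, while $[\langle h\rangle^0]^1_k=0$, so that $\Psi_k = \sum_{n\ge 1}\frac{1}{n!}\Phi^n_k$ is precisely $[\exp\langle h\rangle]^1_k - h_k e^{h_1}$. Thus it suffices to prove that the series defining $\Psi_k$ converges absolutely and uniformly on compact subsets of $\mathbb{C}^{k-1}$; the conclusion that $\Psi_k$ is entire in each variable will then follow because each partial sum $\sum_{n=1}^N \frac{1}{n!}\Phi^n_k$ is a polynomial, hence entire in each variable, and a locally uniform limit of holomorphic functions is holomorphic.

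The key point is to produce an estimate on $|\Phi^n_k|$ that involves only the variables $h_1,\dots,h_{k-1}$ on which $\Phi^n_k$ actually depends. The bound of Lemma \ref{le:h_bound} applies to $[\langle h\rangle^n]^1_k$, which differs from $\Phi^n_k$ by the term $n h_k h_1^{n-1}$ and, through $M_k$, also mixes in $h_k$. I would remove this dependence by exploiting that $\Phi^n_k$ is independent of $h_k$: evaluating the identity of Lemma \ref{le:hpowers_polynomials} at $h_k = 0$ gives $\Phi^n_k(h_1,\dots,h_{k-1}) = [\langle h\rangle^n]^1_k\big|_{h_k=0}$, while setting $h_k=0$ turns $M_k$ into $M_{k-1} := \sup_{1\le n\le k-1}|h_n|^{1/n}$. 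Lemma \ref{le:h_bound} then yields the clean bound $|\Phi^n_k|\le k^{2n}M_{k-1}^{\,n+k-1}$, now in the correct variables.

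With this in hand the convergence is routine. Summing the factorial-weighted bound gives $\sum_{n\ge 1}\frac{1}{n!}k^{2n}M_{k-1}^{\,n+k-1} = M_{k-1}^{\,k-1}\big(e^{k^2 M_{k-1}}-1\big) < \infty$, which proves absolute convergence at each point. For uniform convergence on a compact set $K\subset\mathbb{C}^{k-1}$, I would set $M = \sup_K M_{k-1} < \infty$; then every term is dominated by $\frac{1}{n!}k^{2n}M^{\,n+k-1}$, a convergent series of constants independent of the point, so the Weierstrass M-test applies. Combined with the first paragraph, this shows $\Psi_k$ is entire in each variable.

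Finally, the case $k\le 0$ (power series at $\infty$) is handled in exactly the same way, using the second identity of Lemma \ref{le:hpowers_polynomials}, the corresponding bound of Lemma \ref{le:h_bound} with $M_k=\sup_{k\le n\le 1}|h_n|^{1/n}$, and again evaluating at $h_k=0$ to obtain an estimate in the variables $h_{k+1},\dots,h_1$ only. I expect the only genuine subtlety to be this last device — arranging an estimate on $\Phi^n_k$ that does not involve $h_k$ — since the raw output of Lemma \ref{le:h_bound} concerns $[\langle h\rangle^n]^1_k$ rather than $\Phi^n_k$ itself; once the $h_k=0$ evaluation is in place, everything reduces to comparison with the exponential series.
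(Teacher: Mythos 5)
Your proof is correct and follows essentially the same route as the paper: both rest on the estimate of Lemma \ref{le:h_bound} together with the $1/n!$ factor to dominate the series and conclude locally uniform (hence entire) convergence. The only difference is cosmetic --- the paper bounds the full exponential series and subtracts off the absolutely convergent $h_k e^{h_1}$ part, whereas you isolate $\Phi^n_k$ directly by evaluating at $h_k=0$ before invoking the bound, which is a slightly cleaner way to set up the Weierstrass M-test.
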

 \begin{proof} By Corollary $\ref{co:exp_converges}$ each coefficient of
  $\exp{\left<h\right>}$ converges, and by Lemma \ref{le:hpowers_polynomials}, the $k$th coefficient of the first row of $\exp{\left<h\right>}$ is a function only of $h_1,\ldots,h_k$.  By Lemma \ref{le:h_bound}, each element of the first row of the exponential $\exp{\left<h\right>}$ converges absolutely and uniformly on bounded sets in $\mathbb{C}^{k-1}$.    Thus by Lemma \ref{le:hpowers_polynomials} it
   follows that
   \[  \sum_{n=0}^\infty \frac{1}{n!} \left[ \left<h \right>^n \right]^1_k = \sum_{n=1}^\infty  \left(
   \frac{h_k h_1^{n-1}}{(n-1)!} + \Phi^n_k(h_1,\ldots, h_{k-1}) \right)  \]
   converges absolutely and uniformly on bounded sets in $\mathbb{C}^{k-1}$.  But the sum of the first term clearly converges to $h_k e^{h_1}$ from which the claim follows for $k \geq 2$.
   The proof in the case that $h \in \coneinftyunion$ is similar.
 \end{proof}
 \begin{corollary} \label{co:exp_expression}
  For $h \in \coneounion$ and $k \geq 2$,
  \[  \left[ \exp{\left<h\right>}\right]^1_k =  h_k e^{h_1} + \Psi_k(h_1,\ldots,h_{k-1}).  \]
  In particular, $\left|\exp{\left<h\right>}\right|^1_k$ is an
  entire function of each variable $h_1,\ldots, h_k$ for $k \geq
  1$.
  For $h \in \coneinftyunion$ and $k \leq 0$
  \[  \left[ \exp{\left<h\right>}\right]^1_k =  h_k e^{h_1} + \Psi_k(h_{k+1},\ldots,h_{1})  \]
  and $\left|\exp{\left<h\right>}\right|^1_k$ is entire in each
  variable $h_{k},\ldots,h_1$ for $k \leq 1$.
 \end{corollary}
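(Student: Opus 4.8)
The plan is to read the result directly off the power-series definition of the matrix exponential, combining it with Lemma \ref{le:hpowers_polynomials} and Theorem \ref{th:universal_exp_functions}. Writing
\[ \left[ \exp{\left<h\right>} \right]^1_k = \sum_{n=0}^\infty \frac{1}{n!} \left[ \left<h\right>^n \right]^1_k, \]
the first thing I would note is that for $k \geq 2$ the $n=0$ term contributes $[I]^1_k = 0$, so the sum effectively begins at $n=1$. I would then substitute the splitting from Lemma \ref{le:hpowers_polynomials}, namely $\left[\left<h\right>^n\right]^1_k = n h_k h_1^{n-1} + \Phi^n_k(h_1,\ldots,h_{k-1})$, into each summand.

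The leading parts assemble into a recognizable exponential series,
\[ h_k \sum_{n=1}^\infty \frac{n h_1^{n-1}}{n!} = h_k \sum_{n=1}^\infty \frac{h_1^{n-1}}{(n-1)!} = h_k e^{h_1}, \]
while the remaining parts form exactly $\sum_{n=1}^\infty \frac{1}{n!} \Phi^n_k(h_1,\ldots,h_{k-1})$, which is the definition of $\Psi_k$ from Theorem \ref{th:universal_exp_functions}. This immediately yields the claimed identity $\left[\exp{\left<h\right>}\right]^1_k = h_k e^{h_1} + \Psi_k(h_1,\ldots,h_{k-1})$. For the entirety assertion, each of the two pieces $h_k e^{h_1}$ and $\Psi_k$ is entire in the indicated variables, the former by inspection and the latter by Theorem \ref{th:universal_exp_functions}, so their sum is as well. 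The $\infty$ case follows by applying the identical argument to the lower-triangular versions of Lemma \ref{le:hpowers_polynomials} and Theorem \ref{th:universal_exp_functions}, where $\Phi^n_k$ and $\Psi_k$ are now functions of $h_{k+1},\ldots,h_1$.

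The one point deserving care is the legitimacy of separating a single convergent series into the two pieces above and rearranging within each. This is exactly what Theorem \ref{th:universal_exp_functions} provides: the $\Psi_k$ series converges absolutely and uniformly on compacta, and the series for $h_k e^{h_1}$ converges absolutely, so the split is justified. The case $k=1$, which is needed to cover the ``$k \geq 1$'' clause but falls outside the main formula, I would handle separately using the last identity of Lemma \ref{le:hpowers_polynomials}: since $\left[\left<h\right>^n\right]^1_1 = h_1^n$ for every $n \geq 0$, the entry equals $\sum_{n=0}^\infty h_1^n/n! = e^{h_1}$, plainly entire in $h_1$.

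Since every ingredient is already established, I do not expect a genuine obstacle here; the only thing to watch is the bookkeeping, that is, keeping the $n=0$ term straight across the $k=1$ and $k \geq 2$ cases and invoking the absolute convergence of Theorem \ref{th:universal_exp_functions} before regrouping the series.
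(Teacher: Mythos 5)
Your proposal is correct and follows essentially the same route as the paper: the paper's proof of this corollary simply cites Lemma \ref{le:hpowers_polynomials} and Theorem \ref{th:universal_exp_functions}, and the actual computation you spell out (splitting $\sum_n \frac{1}{n!}[\left<h\right>^n]^1_k$ into the $h_k e^{h_1}$ series plus the $\Psi_k$ series, with convergence supplied by Theorem \ref{th:universal_exp_functions}) is exactly the one carried out in the proof of that theorem. Your separate treatment of $k=1$ via $[\left<h\right>^n]^1_1 = h_1^n$ matches what the paper dismisses as ``immediate.''
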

 \begin{proof} This follows directly from Theorem
 \ref{th:universal_exp_functions} and Lemma
 \ref{le:hpowers_polynomials}, and the fact that $\Phi^n_k$
 are polynomials.  For $k=1$ the claim is immediate.
 \end{proof}
 \begin{remark} \label{re:expbound}
 Letting $M_k=\sup_{n\leq k} |h_n|^{1/n}$ as in Lemma \ref{le:h_bound}, the $k$th entry of the first row satisfies the following bound:
  \[  \left| \left[ \exp{\left< h \right>} \right]^1_k \right| \leq M_k^{k-1}e^{k^2M_k}. \]

  In particular, if $h$ is a convergent power series in a neighbourhood of $0$ then $M= \sup_k M_k$ exists and
   \[  \left| \left[ \exp{\left< h \right>} \right]^1_k \right| \leq M^{k-1}e^{k^2M}. \]
 \end{remark}
 From Corollary \ref{co:exp_expression} and Remark \ref{re:power_rows_polynomial}
 it immediately follows that
 \begin{corollary} \label{co:exp_entire} For $h \in \coneounion$ and $m \geq 1$,
 $[\exp{\left< h \right>}]^m_n$ is an entire function of $h_1,
 \ldots, h_{n-m+1}$.  For $h \in \coneinftyunion$ and $m \geq 1$,
 $[\exp{\left< h \right>}]^m_n$ is an entire function of
 $h_{n-m+1},\ldots, h_1$.
 \end{corollary}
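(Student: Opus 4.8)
The plan is to obtain this as a direct composition of two results already established: Corollary \ref{co:exp_expression}, which says each first-row entry $[\exp\left<h\right>]^1_k$ is entire in the appropriate finite set of coefficients of $h$, and Remark \ref{re:power_rows_polynomial}, which reconstructs every entry of a power matrix polynomially from its first row. The link between the two is Theorem \ref{th:exp_is_power_matrix}: it tells us that $\exp\left<h\right>$ is itself a power matrix, lying in $\mathcal{M}(0)$ (resp.\ $\mathcal{M}(\infty)$), so that the structural relations of Remark \ref{re:power_rows_polynomial}, which are stated for elements of $\mathcal{M}(0)$, may legitimately be applied to it.

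Concretely, I would first invoke Theorem \ref{th:exp_is_power_matrix} to place $\exp\left<h\right> \in \mathcal{M}(0)$. Since $m \geq 1 > 0$, Remark \ref{re:power_rows_polynomial} then writes $[\exp\left<h\right>]^m_n$ as a fixed polynomial in the first-row entries $[\exp\left<h\right>]^1_k$ with $k = 1,\ldots,n-m+1$ (and with $k = n-m+1,\ldots,1$ in the case at $\infty$). Substituting the formula of Corollary \ref{co:exp_expression}, each such $[\exp\left<h\right>]^1_k$ is an entire function of $h_1,\ldots,h_k$ at $0$ (of $h_k,\ldots,h_1$ at $\infty$). As a polynomial in finitely many entire functions of common variables is again entire, $[\exp\left<h\right>]^m_n$ is entire in whichever coefficients of $h$ actually occur.

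The only point requiring attention, and it is mere bookkeeping, is verifying that precisely the coefficients $h_1,\ldots,h_{n-m+1}$ occur and nothing of higher order. This is immediate from the monotonicity of the index range: every first-row entry that enters has column index $k \leq n-m+1$ at $0$ (resp.\ $k \geq n-m+1$ at $\infty$), so its variables lie within $h_1,\ldots,h_{n-m+1}$ (resp.\ $h_{n-m+1},\ldots,h_1$), and no higher-order coefficient is ever introduced. The triangular case $n < m$ yields the entry $0$, which is trivially entire. The case at $\infty$ is handled identically after reversing the inequalities, and could alternatively be transferred from the case at $0$ through the symmetry of Remark \ref{re:Loewner_zero_infty_transform}. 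Since Corollary \ref{co:exp_expression} already carries all the convergence and entireness content, I anticipate no genuine obstacle; the work lies purely in assembling the three cited statements.
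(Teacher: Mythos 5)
Your proposal is correct and follows essentially the same route as the paper, which derives the corollary directly by combining Corollary \ref{co:exp_expression} with Remark \ref{re:power_rows_polynomial}. Your explicit appeal to Theorem \ref{th:exp_is_power_matrix} to justify that $\exp\left<h\right>$ lies in $\mathcal{M}(0)$ (so that the remark applies) is a point the paper leaves implicit, but it is the same argument.
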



 We can now establish the near-invertibility and surjectivity of the exponential function.
 \begin{theorem} \label{th:exp_invertible} Let $[f] \in \mathcal{M}(0)$.
 Let $h_1$ be some choice of $\log{[f]^1_1}$.
 There exists a unique $h \in \coneounion$ such that $h(z)=h_1z +
 \cdots$ and
 $[f] = \exp{\left<h\right>}$.  Furthermore, the coefficients $h_1,\ldots,h_k$
 are determined by the first $k$ coefficients $[f]^1_l$, $l=1,\cdots,k$.
 Similarly if $f \in \mathcal{M}(\infty)$, for a fixed choice of $h_1 =
 \log{[f]^1_1}$ there is a unique $h \in \coneinftyunion$ such that
 $h(z)=h_1z + \cdots$ and
 $[f]=\exp{\left<h\right>}$, and the coefficients $h_k,\ldots,h_1$
 are determined by $[f]^1_l$ for $l=k,\ldots,1$.

 Thus $\exp{\mathfrak{m}(0)} = \mathcal{M}(0)$ and $\exp{\mathfrak{m}(\infty)}= \mathcal{M}(\infty)$.
 \end{theorem}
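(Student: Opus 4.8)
The plan is to solve the equation $[f]=\exp\left<h\right>$ one coefficient at a time, reading off the coefficients of $h$ recursively from the first row. By Theorem~\ref{th:exp_is_power_matrix}, both $\exp\left<h\right>$ and $[f]$ are power matrices, hence each is completely determined by its first row; so it suffices to arrange that $[\exp\left<h\right>]^1_k=[f]^1_k$ for every $k\geq 1$, and the full matrices will then coincide automatically.

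First I would dispose of $k=1$. By Lemma~\ref{le:hpowers_polynomials} we have $[\left<h\right>^n]^1_1=h_1^n$, so $[\exp\left<h\right>]^1_1=\sum_{n\geq 0}h_1^n/n!=e^{h_1}$. Since $f\in\coneo$ forces $[f]^1_1=f_1\neq 0$, the equation $e^{h_1}=[f]^1_1$ is solvable, and its solutions are exactly the values $\log[f]^1_1$; fixing one such branch fixes $h_1$. This is the only point at which a choice is made, and is the source of the ``nearly'' in near-injectivity.

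Next, for $k\geq 2$ I would use the explicit formula of Corollary~\ref{co:exp_expression},
\[
  [\exp\left<h\right>]^1_k = h_k e^{h_1} + \Psi_k(h_1,\ldots,h_{k-1}),
\]
in which $\Psi_k$ depends only on $h_1,\ldots,h_{k-1}$. Assuming inductively that $h_1,\ldots,h_{k-1}$ have been determined, and using $e^{h_1}=[f]^1_1\neq 0$, the single scalar equation $[\exp\left<h\right>]^1_k=[f]^1_k$ determines $h_k$ uniquely:
\[
  h_k = \frac{[f]^1_k - \Psi_k(h_1,\ldots,h_{k-1})}{[f]^1_1}.
\]
This simultaneously yields existence and uniqueness of $h_2,h_3,\ldots$ once $h_1$ is fixed, and it makes the finiteness claim transparent: $h_k$ is a function of $[f]^1_1,\ldots,[f]^1_k$ alone. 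The formal series $h(z)=\sum_{k\geq 1}h_k z^k$ has no constant term and so lies in $\coneounion$. With the first rows now matching for all $k$, Theorem~\ref{th:exp_is_power_matrix} gives $\exp\left<h\right>=[f]$; and since $[f]\in\mathcal{M}(0)$ was arbitrary while $\exp\mathfrak{m}(0)\subseteq\mathcal{M}(0)$ by the same theorem, we conclude $\exp\mathfrak{m}(0)=\mathcal{M}(0)$. The case at $\infty$ runs identically, replacing Corollary~\ref{co:exp_expression} and Theorem~\ref{th:exp_is_power_matrix} by their $\infty$ versions and indexing the coefficients by $k\leq 1$.

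I do not expect a serious obstacle: the triangular structure encoded in Corollary~\ref{co:exp_expression} reduces the problem to a one-dimensional solve at each level, with the nonvanishing of $[f]^1_1$ guaranteeing invertibility at every step. The only point requiring care is the logical order---one must record that equality of the full matrices follows from equality of first rows via Theorem~\ref{th:exp_is_power_matrix}, rather than attempting to match all entries directly---together with the bookkeeping that the branch of $\log[f]^1_1$ is the sole ambiguity.
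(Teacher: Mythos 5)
Your proposal is correct and follows essentially the same route as the paper: the paper's proof is exactly the recursion $h_{k+1}=e^{-h_1}\bigl([f]^1_{k+1}-\Psi_{k+1}(h_1,\ldots,h_k)\bigr)$ obtained from Corollary \ref{co:exp_expression} and Theorem \ref{th:universal_exp_functions}, with the branch of $\log[f]^1_1$ as the only freedom. Your additional remarks --- that $[\exp\left<h\right>]^1_1=e^{h_1}$ justifies the base case, and that equality of first rows upgrades to equality of the full matrices via Theorem \ref{th:exp_is_power_matrix} --- are details the paper leaves implicit, correctly supplied.
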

 \begin{proof}  Assume $h \in \coneounion$.
  By Theorem \ref{th:universal_exp_functions} and Corollary
  \ref{co:exp_expression}
  it is clear that one can solve for the coefficients recursively via
  \[  \left< h\right>^1_{k+1}=e^{-h_1}\left([f]^1_{k+1} - \Psi_{k+1}(h_1,\ldots,h_k)  \right).  \]

  On the other hand if $h \in \coneinftyunion$ then one can solve
  for the coefficients recursively using
  \[  \left< h\right>^1_{k-1}=e^{-h_1}\left([f]^1_{k-1} - \Psi_{k-1}(h_k,\ldots,h_1) \right).  \]
 \end{proof}
\end{subsection}
\begin{subsection}{The matrix logarithm}
 Given $[f] = \exp{\left<h\right>}$,
 we may of course also find $\left<h\right>$ by computing the
 matrix logarithm of $[f]$, provided that it converges.  We show
 that it converges for $[f]^1_1=1$.
 \begin{theorem} Let $[f] \in \mathcal{M}(0)$ satisfy $[f]_1^1=1$.
 Let $I$ be the doubly infinite identity matrix $I=[z]$ and define
 \begin{equation} \label{eq:log_sum_definition}
  \log{[f]} :=  \sum_{n=1}^\infty \frac{(-1)^{n+1}}{n}
   ([f]-I)^n.
 \end{equation}
 The series converges to some $\left<h \right> \in \mathfrak{m}(0)$ (in the sense that each coefficient
 converges), and $h$ is the unique element of $\coneounion$
 satisfying $[f]= \exp{\left<h\right>}$ with first coefficient
 $h_1=1$.  Similarly, if $[f] \in \mathcal{M}(\infty)$ the above
 series converges to $\left<h\right> \in \mathfrak{m}(\infty)$ and
 $h$ is the unique element of $\coneinftyunion$ such that
 $[f]=\exp{\left<h\right>}$ with initial coefficient $h_1=1$.
 \end{theorem}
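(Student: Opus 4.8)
The plan is to exploit the hypothesis $[f]^1_1 = 1$, which forces the entire diagonal to be $1$. Indeed, for $f \in \coneo$ with $f_1 = [f]^1_1 = 1$ we have $[f]^m_m = f_1^m = 1$, so $N := [f] - I$ is strictly upper triangular. By the argument of Proposition \ref{pr:kdiagonalproducts}, $N$ is a sum of $k$-diagonal matrices with $k \geq 1$, and consequently $N^n$ is supported on diagonals $\geq n$; that is, $(N^n)^i_j = 0$ whenever $j - i < n$. Thus in the defining series (\ref{eq:log_sum_definition}) the entry in row $i$ and column $j$ receives contributions only from the finitely many indices $n \leq j - i$. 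This shows at once that each coefficient of $\log{[f]}$ is a finite sum, hence converges, and that the diagonal of $\log{[f]}$ vanishes.

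Next I would show $\exp(\log{[f]}) = [f]$ by reducing to finite matrices block by block. By Proposition \ref{pr:multiplying_finite} the $(m,n)$ principal block of $N^k$ equals the $k$th power of the $(m,n)$ principal block of $N$, so the $(m,n)$ principal block of $\log{[f]}$ coincides with the ordinary finite matrix logarithm of the $(m,n)$ principal block $\tilde{f}$ of $[f]$. Since $\tilde{f}$ is a finite upper triangular matrix with all diagonal entries $1$, it is unipotent, $\tilde{f} - I$ is nilpotent, the finite logarithm series terminates, and standard finite matrix theory gives $\exp(\log{\tilde{f}}) = \tilde{f}$. Applying Corollary \ref{co:exp_converges}, the $(m,n)$ principal block of $\exp(\log{[f]})$ is the exponential of the block of $\log{[f]}$, namely $\exp(\log{\tilde{f}}) = \tilde{f}$, the block of $[f]$. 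As this holds on every principal block, $\exp(\log{[f]}) = [f]$.

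It remains to identify $\log{[f]}$ with an element of $\mathfrak{m}(0)$ and to establish uniqueness. By Theorem \ref{th:exp_invertible}, with the forced choice $h_1 = \log{[f]^1_1} = 0$ (since $e^{h_1} = [f]^1_1 = 1$) there is a unique $h \in \coneounion$ with $h(z) = 0 \cdot z + \cdots$ and $[f] = \exp{\langle h\rangle}$. I claim $\log{[f]} = \langle h\rangle$. Since $h_1 = 0$ the matrix $\langle h\rangle$ is strictly upper triangular, so by Corollary \ref{co:exp_converges} the $(m,n)$ principal block of $[f] = \exp{\langle h\rangle}$ is the exponential of the block of $\langle h\rangle$; thus on each block both $\langle h\rangle$ and $\log{[f]}$ are nilpotent matrices whose exponential is the unipotent block of $[f]$. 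In finite dimensions $\exp$ restricts to a bijection from nilpotent to unipotent matrices whose inverse is the terminating logarithm series, so the nilpotent logarithm of a unipotent matrix is unique; hence the two blocks agree. As this holds on every block, $\log{[f]} = \langle h\rangle \in \mathfrak{m}(0)$, and the uniqueness is exactly that furnished by Theorem \ref{th:exp_invertible}.

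The case $[f] \in \mathcal{M}(\infty)$ is entirely parallel, with ``lower triangular'' in place of ``upper triangular'' and the $\infty$ versions of Propositions \ref{pr:kdiagonalproducts} and \ref{pr:multiplying_finite} and of Theorem \ref{th:exp_invertible}. The convergence and the identity $\exp(\log{[f]}) = [f]$ are immediate once one notices that strict triangularity makes the series finite at each entry; the main obstacle is the final step, namely verifying that $\log{[f]}$ actually lands in the structured subspace $\mathfrak{m}(0)$, which I would settle using the uniqueness of the nilpotent logarithm together with the surjectivity statement of Theorem \ref{th:exp_invertible}.
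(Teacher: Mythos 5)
Your proof is correct and follows essentially the same route as the paper's: blockwise nilpotency of $[f]-I$ gives finiteness of the series on each principal block, the finite-matrix identity $\exp{\log{A}}=A$ combined with Proposition \ref{pr:multiplying_finite} and Corollary \ref{co:exp_converges} gives $\exp(\log{[f]})=[f]$, and Theorem \ref{th:exp_invertible} supplies uniqueness. You are in fact more explicit than the paper on the one point it glosses over --- why $\log{[f]}$ itself lies in $\mathfrak{m}(0)$, which you settle by the uniqueness of the nilpotent logarithm of a unipotent block applied to $\left<h\right>$ with $h_1=0$ --- and your normalization $h_1=\log{[f]^1_1}=0$ is the correct reading of the condition stated (evidently as a typo) as $h_1=1$.
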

 \begin{proof}
  By Proposition \ref{pr:multiplying_finite} the $(m,n)$th
  principal block of $\log{[f]}$ is the corresponding series of
  the $(m,n)$th principal blocks of $[f]-I$.  This matrix is upper triangular
  (respectively lower triangular) with zero diagonal, and hence is
  nilpotent.  Thus the series is in fact finite.  Since the
  original series converges on each $(m,n)$ principal block, it
  must in fact converge.

  For finite matrices $\exp{\log{A}}=A$ so long as
  the logarithm converges \cite{Curtis}.  So by Corollary
  \ref{co:exp_converges} $\exp{\log{[f]}}=[f]$.  Since by Theorem
  \ref{th:exp_invertible} $\left<h\right>$ is uniquely determined,
  the claim follows.
 \end{proof}
 The proof of this theorem has a surprising consequence.
 \begin{theorem}
  Let $[f]\in \mathcal{M}(0)$ satisfy $[f]^1_1=1$.
  Let $h \in \coneounion$ be the unique series
  with first coefficient $h_1=1$ such that $[f]=\exp{\left<h\right>}$.
  For each $k \geq 1$, $h_k$ is a polynomial in
  $[f]^1_l$, $l=2,\ldots,k$.  Similarly if $[f] \in
  \mathcal{M}(\infty)$ satisfies $[f]_1^1$ and $h \in
  \coneinftyunion$ is the unique series such that $h_1=1$ and
  $[f]=\exp{\left<h\right>}$ then $h_k$ is a polynomial in
  $[f]^1_l$ for $l=k,\ldots,0$.
 \end{theorem}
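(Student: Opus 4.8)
The plan is to read off $\left<h\right>$ as the matrix logarithm of $[f]$ and then to propagate two polynomial dependencies: first the dependence of a logarithm entry on the entries of $[f]$ inside a finite principal block, and second the dependence of those block entries on the first row of $[f]$ supplied by Remark \ref{re:power_rows_polynomial}. Since $[f]=\exp\left<h\right>$ and the logarithm series converges by the preceding theorem, we have $\left<h\right>=\log[f]$, and by Definition \ref{de:Liealgebra_zero} the coefficient we want is recovered as the single matrix entry $h_k=\left<h\right>^1_k=[\log[f]]^1_k$. So it suffices to track the dependence of this one entry.

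First I would record that the hypothesis $[f]^1_1=1$ forces every diagonal entry $[f]^m_m=([f]^1_1)^m=1$, so that $[f]-I$ is strictly upper triangular. By Proposition \ref{pr:multiplying_finite}, passing to the $(1,k)$ principal block commutes with matrix multiplication, and on this block $[f]-I$ is a $k\times k$ nilpotent matrix. Hence in the defining series $\sum_n \tfrac{(-1)^{n+1}}{n}([f]-I)^n$ only finitely many terms contribute to the $(1,k)$ entry, and each term is a finite sum of products of entries $[f]^i_j-\delta^i_j$ with $1\le i\le j\le k$. Consequently $h_k=[\log[f]]^1_k$ is a polynomial in the block entries $\{[f]^i_j : 1\le i\le j\le k\}$.

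Next I would convert this into a dependence on first-row entries alone. By Remark \ref{re:power_rows_polynomial}, each block entry $[f]^i_j$ above (with $i\ge 1$) is itself a polynomial in $[f]^1_l$ for $l=1,\ldots,j-i+1$, where the maximal range $j-i+1\le k$ is attained only at $i=1,\,j=k$. Substituting these expressions into the polynomial for $h_k$ exhibits $h_k$ as a polynomial in $[f]^1_l$ for $l=1,\ldots,k$. Since the hypothesis $[f]^1_1=1$ fixes the $l=1$ variable to a constant, this reduces to a polynomial in $[f]^1_l$ for $l=2,\ldots,k$, as claimed.

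The argument for $\mathcal{M}(\infty)$ is entirely parallel: the matrices are lower triangular, $[f]-I$ is strictly lower triangular and nilpotent on each principal block, so $h_k=[\log[f]]^1_k$ is again polynomial in the block entries, which by the $\infty$-case of Remark \ref{re:power_rows_polynomial} are polynomials in $[f]^1_l$ for $l=k,\ldots,1$; fixing $[f]^1_1=1$ removes the $l=1$ dependence and yields a polynomial in $[f]^1_l$ for $l=k,\ldots,0$. The one point requiring care, and the main obstacle, is the index bookkeeping: one must check that composing the two polynomial dependencies does not secretly require an entry $[f]^1_l$ outside the asserted range. This is controlled precisely by the bound $j-i+1\le k$ from Remark \ref{re:power_rows_polynomial} together with the fact that the strictly triangular structure confines every summation index arising in $([f]-I)^n$ to the $(1,k)$ block.
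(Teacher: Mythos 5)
Your argument is correct and takes exactly the paper's approach: the paper's entire proof reads ``This follows from the finiteness of the sum (\ref{eq:log_sum_definition})'', and you have simply supplied the details it leaves implicit (nilpotency of $[f]-I$ on each principal block, hence a finite logarithm series there, followed by Remark \ref{re:power_rows_polynomial} to convert block entries into first-row entries). The only quibble is that in the $\mathcal{M}(\infty)$ case the relevant $(k,1)$ block contains rows of negative index, which Remark \ref{re:power_rows_polynomial} as stated (it assumes $m\geq 0$) does not literally cover --- but that small gap is equally present in the paper's one-line proof.
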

 \begin{proof}  This follows from the finiteness of the sum
 (\ref{eq:log_sum_definition}).
 \end{proof}
\end{subsection}
\end{section}
\begin{section}{Applications}
\begin{subsection}{Computing coefficients of solutions to the Loewner equation}
 For power matrices, the one-parameter subgroups are generated by
 exponentials of $zp(z)$ for $p$ independent of time.  Similarly,
 the Loewner equation with time-independent $p$ appears in the analytic
 theory of semigroups \cite{Shoikhet}.  In this section we show
 that by the preceding results it is easy to
 compute the coefficients of a solution to either Loewner
 differential equation with the exponential map.

 By a Loewner chain we mean a one-parameter family of univalent functions $f_t$ on $\mathbb{D}$ for $t \in [a,\infty)$, satisfying the normalization $f_t(0)=0$, and $f_t'(0)=e^{\alpha t}$ for some constant $\alpha$,
 and the subordination condition $t \leq s \Rightarrow f_t(\mathbb{D}) \subset f_s(\mathbb{D})$.  Any Loewner chain satisfies the Loewner partial differential equation $\dot{f}_t(z)=zp_t(z)f_t(z)$ almost everywhere,  for some time-dependent $p_t$ which is measurable in $t$, complex analytic on $\mathbb{D}$ and satisfies $\mbox{Re}(p_t)>0$ for each $t$.  (We also must have that $\alpha=p(0)$).
 \begin{theorem} \label{th:LoewnerPDEsolved_by_exp}
  Let $f_t$ be a Loewner chain on $[a,\infty)$, with initial point $f_a$ and satisfying the Loewner partial differential equation $\dot{f}_t(z) = zp(z)f_t'(z)$ with constant infinitesimal generator.  Then the power matrix of $f_t$ is given by
 \[ [f_t]= [f_a]\exp{(t-a)\left<zp\right>}.  \]
 In particular, the first $n$ coefficients of $f_t$ is the first row of the $(1,n)$ principal block of $[f_a]\exp{(t-a)\left<zp\right>}$.
 \end{theorem}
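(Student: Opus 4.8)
The plan is to exploit the constancy of the generator to turn the statement into a uniqueness assertion for a linear matrix ordinary differential equation with constant coefficients. First I would observe that since $p(0)=1$ we have $zp(z)= z + p_1 z^2 + \cdots \in \coneo \subset \coneounion$, so that $\left<zp\right> \in \mathfrak{m}(0)$ is a well-defined upper-triangular matrix; and because the infinitesimal generator is constant, the series playing the role of $h_t$ in the Loewner partial differential equation (\ref{eq:Loewner_PDE_definition}) is the fixed element $zp$, independent of $t$. The hypothesis that $f_t$ is a Loewner chain guarantees that it satisfies this partial differential equation for almost every $t$.

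Next I would pass to the matrix form. The coefficients of a Loewner chain are locally absolutely continuous in $t$, so each entry of $[f_t]$ is absolutely continuous; applying Proposition \ref{pr:formal_matrix_Loewner_equal} in the weakened-regularity form of Remark \ref{re:matrix_Loewner_ae}, the power matrix satisfies
\[ \frac{d}{dt}[f_t] = [f_t]\left<zp\right> \]
for almost every $t$, with initial value equal to $[f_a]$ at $t=a$. I would then introduce the candidate $M_t := [f_a]\exp{(t-a)\left<zp\right>}$. By Proposition \ref{pr:derivative_of_exp} this matrix satisfies $\frac{d}{dt} M_t = M_t \left<zp\right>$ for every $t$, and clearly $M_a = [f_a]$. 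Thus $[f_t]$ and $M_t$ are two absolutely continuous solutions of one and the same linear matrix equation $\dot{X} = X\left<zp\right>$, whose coefficient matrix is independent of $t$, sharing the initial condition at $t=a$.

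Finally I would conclude by a block-wise uniqueness argument. By Proposition \ref{pr:multiplying_finite} and Corollary \ref{co:exp_converges}, restricting any such product to the $(1,n)$th principal block involves only the $(1,n)$th blocks of the factors, so the restriction of the equation $\dot{X} = X\left<zp\right>$ to this block is a \emph{finite} system of linear ordinary differential equations whose coefficient matrix is the $(1,n)$th block of $\left<zp\right>$. Standard Carath\'eodory uniqueness for such finite linear systems then forces $[f_t]$ and $M_t$ to agree on the $(1,n)$th block for every $n$, hence as doubly-infinite matrices, which is the asserted identity. The ``in particular'' clause is then immediate, since $f_t$ is recovered from the first row of $[f_t]$ and its first $n$ coefficients occupy the first row of the $(1,n)$th principal block. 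The main obstacle I anticipate is the regularity bookkeeping---justifying the almost-everywhere matrix equation and the absolute continuity of the entries of $[f_t]$ so that the uniqueness theorem genuinely applies---rather than any algebraic difficulty, since the constancy of $\left<zp\right>$ makes the comparison with $M_t$ completely transparent.
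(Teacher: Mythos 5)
Your proposal is correct and follows essentially the same route as the paper: pass to the matrix Loewner PDE via Proposition \ref{pr:formal_matrix_Loewner_equal} and Remark \ref{re:matrix_Loewner_ae}, compare with $[f_a]\exp{(t-a)\left<zp\right>}$ using Proposition \ref{pr:derivative_of_exp}, and conclude by uniqueness. The only difference is that you spell out the block-wise finite-system uniqueness argument that the paper compresses into ``the solution is clearly...''; this is exactly the reasoning the paper uses elsewhere (e.g.\ in Theorem \ref{th:exp_is_power_matrix}), so it is a welcome elaboration rather than a departure.
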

 \begin{proof}  Since $f_t$ satisfies the Loewner partial differential equation almost everywhere, it also satisfies the formal Loewner partial differential equation almost everywhere.  By Proposition \ref{pr:formal_matrix_Loewner_equal} and Remark \ref{re:matrix_Loewner_ae} it follows that $[f_t]$ satisfies the matrix Loewner partial differential equation.  The solution with initial condition $[f_a]$ is clearly $[f_a]\exp{(t-a)\left<zp\right>}$.
 \end{proof}
 The same claim clearly holds for solutions to the Loewner ordinary differential equation.  For infinitesimal generators $p$ which are holomorphic in $\mathbb{D}$ and satisfy $\mbox{Re}(p)>0$, there is a solution $w_t$ to the Loewner ordinary differential equation which further satisfies $t \leq s \Rightarrow w_s(\mathbb{D}) \subset w_t(\mathbb{D})$.
 \begin{theorem}  \label{th:LoewnerODEsolved_by_exp}
  Let $h(z)=zp(z)$ be analytic on $\mathbb{D}$ and satisfy $\mbox{Re}(p) >0$.  Let $w_t$ be the solution to the Loewner ordinary differential equation
  \[  \frac{d}{dt} w_t(z)=-w_t(z) p(w_t(z))  \]
  on $[s,\infty)$ with initial condition $w_a(z)=w(z)$, where $w:\mathbb{D} \rightarrow \mathbb{D}$ is univalent and $w(0)=0$, $0<w'(0)<1$.  The power matrix of $w_t$ is given by
  \[  [w_t] = \left[\exp{-(t-a) \left<zp\right>}\right][w_a].  \]
  In particular, the first $n$ coefficients of $w_t$ is the first row of the exponential of the  $(1,n)$ principal block of $\left[\exp{-(t-a) \left<zp\right>}\right][w_a]$.
 \end{theorem}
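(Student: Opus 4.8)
The plan is to mirror the proof of Theorem \ref{th:LoewnerPDEsolved_by_exp} almost verbatim, replacing the \emph{partial} equation (right multiplication) by the \emph{ordinary} equation (left multiplication). The only genuinely new bookkeeping is to identify the correct constant infinitesimal generator and to keep careful track of the sign and the direction of time.

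First I would observe that the equation in the statement,
\[ \frac{d}{dt} w_t(z) = -w_t(z)\, p(w_t(z)), \]
is exactly the formal Loewner ordinary differential equation (\ref{eq:formal_Loewner_ordinary}) with the \emph{time-independent} generator $\tilde{h}(z) = -zp(z)$: writing $\zeta = w_t(z)$, the right-hand side equals $-\zeta\, p(\zeta) = \tilde{h}(\zeta) = \tilde{h}\circ w_t(z)$. Since $p$ is holomorphic on $\mathbb{D}$ with $\mathrm{Re}(p)>0$ we have $zp \in \coneounion$, hence $\tilde{h} = -zp \in \coneounion$ as well, while $w_t \in \coneo$ for every $t$ because $w_t$ is univalent on $\mathbb{D}$ with $w_t(0)=0$. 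Directly from Definition \ref{de:Liealgebra_zero}, the entries of the infinitesimal power matrix are linear in $h$, so $\left< -zp \right> = -\left< zp \right>$.

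Next, by Proposition \ref{pr:formal_matrix_Loewner_equal}(2), the power matrix $[w_t]$ satisfies the matrix Loewner ordinary differential equation
\[ \frac{d}{dt}[w_t] = \left< \tilde{h} \right>[w_t] = -\left< zp \right>[w_t]. \]
This is a linear matrix ODE with constant coefficient $-\left< zp \right>$. By Proposition \ref{pr:multiplying_finite} it restricts to each $(1,n)$ principal block as a \emph{finite} linear system, which has a unique solution for the initial value $[w_a]$. By Proposition \ref{pr:derivative_of_exp} together with Corollary \ref{co:exp_converges}, the matrix $\exp(-(t-a)\left< zp \right>)[w_a]$ solves this block system with the correct initial condition, so uniqueness forces $[w_t] = \exp(-(t-a)\left< zp \right>)[w_a]$ on every principal block, hence as doubly-infinite matrices. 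That the left factor $\exp(-(t-a)\left< zp \right>)$ is itself the power matrix of a function in $\coneo$ is Theorem \ref{th:exp_is_power_matrix}, which places the product back in $\mathcal{M}(0)$, though this is not strictly needed for the stated identity.

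The step I expect to need the most care is precisely this identification of the generator, with its sign and time direction: the data is prescribed at $t=a$ and the generator is $-\left< zp \right>$, so the one-parameter family is $\exp(-(t-a)\left< zp \right>)$ rather than $\exp((t-a)\left< zp \right>)$, and it multiplies $[w_a]$ on the \emph{left}, in contrast to the right multiplication appearing in the partial-equation Theorem \ref{th:LoewnerPDEsolved_by_exp}. Because $w_t$ here solves a genuine ordinary differential equation and is therefore smooth in $t$, the almost-everywhere version of Proposition \ref{pr:formal_matrix_Loewner_equal} recorded in Remark \ref{re:matrix_Loewner_ae} is not required; the differentiable form applies directly.
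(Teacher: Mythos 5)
Your proposal is correct and follows essentially the same route as the paper: the paper proves the ODE case by the same reduction to the matrix Loewner ordinary differential equation via Proposition \ref{pr:formal_matrix_Loewner_equal}, with uniqueness on each finite principal block and Proposition \ref{pr:derivative_of_exp} identifying the solution as $\exp(-(t-a)\left<zp\right>)[w_a]$ (the paper merely states ``the proof is similar'' to the PDE case, and spells out this block-uniqueness argument later in Theorem \ref{th:exp_P_bounded_univalent}). Your explicit bookkeeping of the sign, the left multiplication, and the fact that genuine differentiability in $t$ makes the almost-everywhere version unnecessary is exactly the intended content.
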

 The proof is similar.
 \begin{remark}
  It is clear that Theorems \ref{th:LoewnerPDEsolved_by_exp} and \ref{eq:Loenwer_ODE_definition}
   will continue to hold whenever we have a solution to the Loewner equation, analytic near $0$, which is regular enough in $t$ that the power series of the solution can be differentiated term-by-term with respect to $t$.
  We will give a more precise statement of what it means to be a reasonable local solution to the Loewner partial differential equation in the next section.
 \end{remark}

 It is also evident that one can approximate a finite set of coefficients of a solution to the Loewner equation arbitrarily closely, with a finite matrix operation. Let $p$ be analytic on $\mathbb{D}$ and satisfy $\mbox{Re}(p)>0$.  Fix $n$.
 To keep notation simple, let $\left<zp\right>$ refer to the $(1,n)$-principal block of the infinitesimal generator matrix.  For any $n \times n$ matrix $A$ define the matrix norm
 \[  \| A \| = \sup_{1 \leq i,j \leq n} \left| A^i_j \right|.  \]
 In particular
 \begin{equation} \label{eq:zpnorm_by_coefficients}
  \| \left< zp \right> \| \leq \sup_{1 \leq l \leq n} n \left| \left< zp \right>^1_l \right|
 \end{equation}
 can be estimated in terms of the first $n$ coefficients of $p$.  By an elementary
 estimate
 \[  \|  \left(t\left<zp\right>\right)^m \| \leq t^m \|\left<zp\right>\|^m n^{m-1}  \]
 for all integers $m \geq 1$.
 Thus if we let
 \[  T_q = \sum_{m=0}^q \frac{t^m}{m!} \left<zp\right>^m  \]
 be the $q$th partial sum of the matrix exponential, we have the estimate
 \begin{equation} \label{eq:exponential_estimate}
  \| \exp{(t\left<zp\right>)} - T_q \| \leq \frac{1}{n} \sum_{m=q+1}^\infty \frac{t^mn^m}{m!} \left\| \left<zp\right>^m \right\| = \frac{1}{n} \left(
  \exp{tn \|\left<zp\right>\|} - \sum_{m=0}^q \frac{t^mn^m}{m!} \|  \left<zp \right>^m \| \right).
 \end{equation}
 We have thus just proven that
 \begin{theorem} \label{th:exp_polynomial_approximation}
  Let $f_t$ be a solution to the Loewner partial differential equation with constant infinitesimal generator $p$ and initial condition $f_a$
  as in Theorem \ref{th:LoewnerPDEsolved_by_exp}.  For any $T>a$ and $n \geq 0$,
  there is a $q$ so that for $t \in [s,T]$ the coefficients of the power series of $f_t$ are approximated by the first $n$ elements of the first row of
  \[  [f_0] \sum_{m=0}^q \frac{(t-a)^n}{m!} \left<zp \right>^m \]
  where $\left<zp\right>$ denotes the $(1,n)$th block of the power matrix of
  the infinitesimal generator.
  This approximation is uniform in the index of the coefficient and in $t$.

  Similarly, if $w_t$ is a solution to the Loewner ordinary differential equation with constant infinitesimal generator $p$ and initial condition $w_a$ as in Theorem \ref{th:LoewnerODEsolved_by_exp} then for any $T>a$ and $n>0$ there is a $q$ such that the coefficients of $w_t$ are approximated by
  \[  \left(\sum_{m=0}^q (-1)^m\frac{(t-a)^m}{m!} \left<zp \right>^m\right)[w_a]  \]
  uniformly on $[a,T]$ and in the index of the coefficient.
 \end{theorem}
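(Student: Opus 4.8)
The plan is to reduce the statement to a finite-dimensional matrix exponential and then invoke the explicit tail estimate (\ref{eq:exponential_estimate}) already established. First I would recall from Theorem \ref{th:LoewnerPDEsolved_by_exp} that $[f_t] = [f_a]\exp((t-a)\left<zp\right>)$, so that the first $n$ coefficients of $f_t$ are precisely the first $n$ entries of the first row of this product. By Corollary \ref{co:exp_converges}, the $(1,n)$ principal block of $\exp((t-a)\left<zp\right>)$ is the ordinary exponential of the $(1,n)$ principal block of $(t-a)\left<zp\right>$, and by Proposition \ref{pr:multiplying_finite} the first $n$ first-row entries of $[f_a]\exp((t-a)\left<zp\right>)$ depend only on the $(1,n)$ blocks of the two factors. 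Thus the entire computation collapses to a genuine $n\times n$ matrix exponential, and from here on $\left<zp\right>$ and $[f_a]$ denote those finite blocks.

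Next I would set up the elementary estimate. With the norm $\|A\|=\sup_{1\le i,j\le n}|A^i_j|$, the submultiplicative-type bound $\|AB\|\le n\|A\|\,\|B\|$ for $n\times n$ matrices yields $\|((t-a)\left<zp\right>)^m\|\le (t-a)^m\|\left<zp\right>\|^m n^{m-1}$, which is exactly the input to (\ref{eq:exponential_estimate}). Letting $T_q=\sum_{m=0}^q \tfrac{(t-a)^m}{m!}\left<zp\right>^m$ denote the $q$th Taylor partial sum of $\exp((t-a)\left<zp\right>)$, that estimate bounds $\|\exp((t-a)\left<zp\right>)-T_q\|$ by $\tfrac1n$ times the tail $\exp((t-a)n\|\left<zp\right>\|)-\sum_{m=0}^q \tfrac{((t-a)n\|\left<zp\right>\|)^m}{m!}$ of a single scalar exponential series.

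To obtain uniformity I would use that for $t\in[a,T]$ the argument $(t-a)n\|\left<zp\right>\|$ is bounded above by $(T-a)n\|\left<zp\right>\|$. Since the scalar exponential series converges, its tail past index $q$ tends to $0$ as $q\to\infty$ uniformly in $t\in[a,T]$; hence given any $\varepsilon>0$ one may fix a single $q$ making the tail, and therefore $\|\exp((t-a)\left<zp\right>)-T_q\|$, smaller than $\varepsilon$ for all $t$ at once. Left multiplication by the fixed block $[f_a]$ multiplies this error by at most $n\|[f_a]\|$, again by the submultiplicative bound, so the first $n$ first-row entries of $[f_a]T_q$ approximate those of $[f_t]$ uniformly in both $t\in[a,T]$ and the coefficient index $1\le k\le n$. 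This establishes the partial differential equation statement.

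For the ordinary differential equation I would argue identically, starting instead from $[w_t]=\exp(-(t-a)\left<zp\right>)[w_a]$ given by Theorem \ref{th:LoewnerODEsolved_by_exp}. The alternating sum $\sum_{m=0}^q(-1)^m\tfrac{(t-a)^m}{m!}\left<zp\right>^m$ is precisely the $q$th Taylor partial sum of $\exp(-(t-a)\left<zp\right>)$, and since $\|-\left<zp\right>\|=\|\left<zp\right>\|$ the same tail estimate applies verbatim; the only change is that $[w_a]$ now multiplies on the right. I do not expect a genuine obstacle: the whole analytic content is the tail bound (\ref{eq:exponential_estimate}), already derived. The only points needing care are bookkeeping the factor $n$ that arises because the sup-norm is not submultiplicative, and checking that one choice of $q$ works simultaneously for all $t\in[a,T]$ and all coefficient indices — both of which follow immediately from the boundedness of $[a,T]$ and the explicit form of the estimate.
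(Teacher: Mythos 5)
Your proposal is correct and takes essentially the same route as the paper: reduce to the $(1,n)$ principal blocks, bound $\bigl\|[f_a]\bigl(\exp((t-a)\left<zp\right>)-T_q\bigr)\bigr\|$ by the product of norms, and apply the tail estimate (\ref{eq:exponential_estimate}) together with the boundedness of $[a,T]$ to get uniformity. The paper's proof is just a terser version of this (it even omits the factor of $n$ in the submultiplicativity bound that you correctly track), so no further comparison is needed.
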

 \begin{proof}
  We use the estimate
  \begin{align*}
    & \left\|[f_0] \left( \exp{(t-a)\left<zp\right>}  - \sum_{m=0}^q \frac{(t-a)^n}{m!} \left<zp \right>^m \right)
    \right\|\\
  & \ \ \ \ \ \ \ \ \ \ \ \ \leq   \|[f_0]\|  \left\| \left( \exp{(t-a)\left<zp\right>} - \sum_{m=0}^q \frac{(t-a)^n}{m!} \left<zp \right>^m  \right) \right\|
  \end{align*}
  and apply (\ref{eq:exponential_estimate}).  The proof of the ordinary differential equation case is similar.
 \end{proof}
 \begin{remark} It is clear that similar uniform approximations hold for coefficients of the positive and negative powers of the solution to the Loewner equation.
 \end{remark}
\end{subsection}
\begin{subsection}{The exponential map and holomorphic power series}
 We begin the section with what is now a simple observation.
 \begin{corollary}
  Let $f$ be holomorphic in a neighbourhood of $0$, and satisfy $f(0)=0$, $f'(0)\neq 0$.  There exists a formal power series $h(z)=zp(z)$ such that
  \[ [f]= \exp{\left<h\right>}.  \]
  In particular, there is a formal power series $f_t$ for all $t \in [0,1]$ such that the coefficients of $f_t$ are differentiable in $t$ and $f_t$ satisfies the
  formal Loewner partial differential equation
  \[  \dot{f}_t = zp(z)f_t'(z)  \]
  with $f_0(z)=z$ and $f_1(z)=f(z)$.
 \end{corollary}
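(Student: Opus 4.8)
The plan is to read both assertions directly off the surjectivity and one-parameter-subgroup results already established, since the hypotheses on $f$ place it squarely inside $\coneo$.

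First I would observe that a function $f$ holomorphic near $0$ with $f(0)=0$ and $f'(0)\neq 0$ has a convergent Taylor expansion $f(z)=f_1 z + f_2 z^2 + \cdots$ with $f_1\neq 0$, hence $f \in \coneo$ and $[f]\in \mathcal{M}(0)$. By the surjectivity half of Theorem~\ref{th:exp_invertible}, there is an $h\in\coneounion$ with $[f]=\exp\left<h\right>$. Because every element of $\coneounion$ vanishes at the origin, I can write $h(z)=zp(z)$ where $p(z)=h(z)/z$ is a formal power series, which gives the first claim.

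For the second claim I would produce the interpolating family explicitly through the matrix exponential. Define $[f_t]:=\exp(t\left<h\right>)$ for $t\in[0,1]$. By Theorem~\ref{th:exp_is_power_matrix} each such matrix lies in $\mathcal{M}(0)$, so it is the power matrix of a well-defined $f_t\in\coneo$ whose coefficients are $[f_t]^1_k=[\exp(t\left<h\right>)]^1_k$. The endpoint conditions are immediate: at $t=0$ the exponential is the identity matrix $[z]$, so $f_0(z)=z$, and at $t=1$ we recover $[f]$, so $f_1=f$.

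Finally I would verify the regularity and the differential equation. Differentiability of the coefficients in $t$ follows from Proposition~\ref{pr:derivative_of_exp} (equivalently, each principal block is an ordinary matrix exponential, entire in $t$). That same proposition gives $\frac{d}{dt}\exp(t\left<h\right>)=\exp(t\left<h\right>)\left<h\right>$, i.e.\ $[f_t]$ satisfies the matrix Loewner partial differential equation with generator $\left<h\right>$; invoking the equivalence in Proposition~\ref{pr:formal_matrix_Loewner_equal} then shows $f_t$ satisfies the formal Loewner partial differential equation $\dot f_t = h\, f_t' = zp(z)\,f_t'(z)$. I do not expect a genuine obstacle here: the content is entirely in recognizing that surjectivity supplies $h$ and that the one-parameter subgroup $\exp(t\left<h\right>)$ is precisely the sought chain. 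The only small points requiring care are that $h\in\coneounion$ automatically factors as $zp$, and that term-by-term $t$-differentiability is legitimate because each coefficient is determined inside a finite principal block.
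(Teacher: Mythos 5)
Your proposal is correct and follows essentially the same route as the paper: the paper's own (one-line) proof cites Propositions \ref{pr:formal_matrix_Loewner_equal} and \ref{pr:derivative_of_exp}, with the existence of $h$ coming implicitly from the surjectivity in Theorem \ref{th:exp_invertible}, exactly as you spell out. Your additional remarks (that $h\in\coneounion$ factors as $zp$, and that term-by-term differentiability is legitimate because everything happens in finite principal blocks) are just the details the paper leaves tacit.
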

 \begin{proof}
  This follows immediately from Propositions \ref{pr:formal_matrix_Loewner_equal} and \ref{pr:derivative_of_exp}.
 \end{proof}
 This leads to two natural questions.  First, if the generator is holomorphic, must its
 exponential be holomorphic?  Second, if a function is holomorphic, is it the exponential of a
 holomorphic power series?  These questions were posed by Huang \cite{Huang_private}, in the
 context of derivations and formal power series.   The main idea of this section is to phrase the
 question in terms of the power matrix and formal Loewner equations.  The connection to Loewner
 theory suggests some possible approaches to the question.

 In order to do this, we need a suitable notion of local solutions to the Loewner equations.
 As pointed out in Remark \ref{re:Loewner_natural}, the Loewner equations arise naturally even in the setting of formal power series, with any assumption of holomorphicity removed. Thus one is led to an
 intermediate version of the Loewner equation, in which the infinitesimal generator and solutions are holomorphic in some neighbourhood of the origin, but not necessarily on the entire unit disc?
 To this end we make the following definition.
 \begin{definition} \label{de:LoewnerPDE_solution_sense}
  Let $h_t(z)=zq_t(z)$ be analytic on a neighbourhood of $0$ for all $t \in (t_1,t_2)$.  We say that $f_t$ is a local solution of the Loewner PDE on $(t_1,t_2)$ if for all $t_0 \in (t_1,t_2)$, there is an interval $(a,b)$ containing $t_0$ and an open neighbourhood $U$ of $0$ such that $f_t$ and $q_t$ are complex analytic on $U$ for all $t \in (a,b)$, $f_t(z)$ is jointly continuous in $t$ and $z$ on $(a,b) \times U$,
  differentiable in $t$ for fixed $z$ and
  \[  \frac{d}{dt} f_t(z)=zq_t(z) f_t'(z). \]
  By a local solution $f_t$ of the Loewner partial differential equation on $[t_1,t_2)$ with initial condition $f$ we mean a solution of the form above, replacing with right continuity and right-hand derivatives in the appropriate places, with $f_{t_1}=f$.
 \end{definition}
 Similarly we define a notion of local solution to the ordinary differential equation.
 \begin{definition} \label{de:LoewnerODE_solution_sense}
  We say that $f_t$ is a local solution to the Loewner ordinary differential equation on $(t_1,t_2)$ if for all $t_0 \in (t_1,t_2)$, there is an interval $(a,b)$ containing $t_0$ and an open neighbourhood $U$ of $0$ such that $f_t$ and $q_t \circ f_t$ are complex analytic on $U$ for all $t \in (a,b)$, $f_t(z)$ is jointly continuous in $t$ and $z$ on $(a,b) \times U$, differentiable
  in $t$ for fixed $z$ and
  \[  \frac{d}{dt} f_t(z)= zq_t(z) f_t'(z). \]
 \end{definition}
 \begin{remark} \label{re:joint_continuity} It follows immediately from joint continuity that
 \[ \lim_{t \rightarrow s} f_t =f_s  \]
 uniformly on compact subsets of $U$ if $s \in (a,b)$ for $(a,b)$, $U$ as above.
 Thus the coefficients of the power series of $f_t$ are differentiable in $t$, and $f_t$ is a solution to the formal Loewner partial differential equation.
 \end{remark}
 \begin{remark}  If the infinitesimal generator $q_t$ is to vary, an appropriate definition of a local solution might involve weaker conditions on the $t$ dependence of $q$ and its solution (perhaps absolute continuity of the solution and measurability in $t$ of the generator, as is standard).  However, we are concerned with the case that $q$ constant in time; in this case, the solution must be jointly
 continuous, so our assumptions are not restrictive.
 \end{remark}

 It is now possible to pose the questions described above.
 Let $\mathcal{M}_A(0)$ denote the set of power matrices in $\mathcal{M}(0)$ arising from holomorphic power series.  Similarly let $\mathfrak{m}_A(0)$ denote the elements of $\mathfrak{m}(0)$ arising from holomorphic power series.
 \begin{question}  Is $\exp{\mathfrak{m}_A(0)} \subset \mathcal{M}_A(0)$?
 \end{question}
 We may also ask the corresponding question of local solutions to the Loewner partial differential equation.
 \begin{question}
  Given $h(z)=zp(z)$ for $p$ analytic in a neighbourhood of $0$, is there a local solution $f_t$ of the Loewner partial differential equation on $[0,\infty)$ with infinitesimal generator $p$ and initial condition $f_0(z)=z$?
 \end{question}

 Furthermore we have the two converse questions
 \begin{question} Is $\mathcal{M}_A(0) \subset \exp{\mathfrak{m}_A(0)}$?
 \end{question}
 and
 \begin{question} Let $f$ be holomorphic in a neighbourhood of $0$ and satisfy $f(0)=0$ and $f'(0)\neq 0$.  Is there an $h(z)=zp(z)$ which is analytic in a neighbourhood of $0$ and a $T$ such that the Loewner equation
 \[ \frac{d}{dt}f_t(z) = z p(z) f_t'(z)  \]
 has a local solution on $[0,T]$ with initial condition $f_0(z)=z$ such that $f_T=f$?
 \end{question}

 I conjecture that the answers to Questions 4.1 and 4.2 are ``no''.  However it is possible to demonstrate a partial result: the exponential of a holomorphic power series stays holomorphic on some finite interval in time.  This is a simple consequence of the Cauchy-Kowalevski theorem.
 \begin{theorem} \label{th:exp_analytic_smallt}  Let $h(z)=zp(z)$ and $g(z)$ be analytic in a neighbourhood $V$ of $0$.  There is an open set $U$ of $0$, an interval $(-T,T)$ and holomorphic functions $f_t$ for $t \in (-T,T)$ on $U$ such that
 \[  \frac{d}{dt} f_t(z) = zp(z) f_t'(z)  \]
 and $f_0(z)=g(z)$.
 \end{theorem}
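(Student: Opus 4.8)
The plan is to recognize the one-parameter family of equations $\frac{d}{dt}f_t(z)=zp(z)f_t'(z)$ as a single first-order partial differential equation for a function $f(t,z)$ of two variables, and then apply the Cauchy--Kowalevski theorem. Writing $f(t,z)=f_t(z)$, the requirement becomes
\[
 \frac{\partial f}{\partial t}(t,z)=zp(z)\,\frac{\partial f}{\partial z}(t,z),
 \qquad f(0,z)=g(z).
\]
This is already in the normal form to which Cauchy--Kowalevski applies: it is solved for the first $t$-derivative $\partial f/\partial t$, and the right-hand side is the function $(t,z,f,w)\mapsto zp(z)\,w$ evaluated at $w=\partial f/\partial z$. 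Since $h(z)=zp(z)$ is holomorphic on $V$, this right-hand side is analytic in all of its arguments (indeed it is linear in $w$, with analytic coefficient $zp(z)$, and depends neither on $t$ nor on $f$ itself), and the Cauchy datum $g$ is analytic by hypothesis.

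Next I would invoke the Cauchy--Kowalevski theorem, treating $t$ as a complex variable alongside $z$. It produces a unique function $f(t,z)$ that is jointly analytic on a polydisc $\{|t|<T\}\times\{|z|<r\}$ for some $T,r>0$, satisfies the displayed partial differential equation there, and agrees with $g$ when $t=0$. Setting $U=\{|z|<r\}$ and $f_t(z)=f(t,z)$ for real $t\in(-T,T)$, each $f_t$ is holomorphic in $z$ on $U$, the disc $U$ being the same for every $t$ since it is the $z$-section of the polydisc. Finally I would check that the partial differential equation recovers the stated ordinary differential equation in $t$: for fixed $z\in U$ the joint analyticity lets us identify $\frac{d}{dt}f_t(z)$ with $\partial f/\partial t(t,z)$, which equals $zp(z)\,\partial f/\partial z(t,z)=zp(z)\,f_t'(z)$, together with $f_0(z)=g(z)$.

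I do not expect a serious obstacle, since the equation is linear and already in Cauchy--Kowalevski normal form; the only points requiring care are the bookkeeping that complexifying $t$ yields a single neighbourhood $U$ valid for all $t$, and the identification of the $t$-derivative of the $z$-power series (equivalently, that term-by-term differentiation in $z$ commutes with differentiation in $t$), both of which follow from the joint analyticity of the solution on the polydisc. One could alternatively bypass Cauchy--Kowalevski entirely by the method of characteristics: the characteristic equation $\dot z=-zp(z)$ has the origin as a fixed point, hence an analytic flow near $(0,0)$, and $f(t,z)=g$ evaluated along the backward flow solves the problem; but the Cauchy--Kowalevski route is the shortest and matches the statement.
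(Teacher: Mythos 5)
Your proposal is correct and takes essentially the same route as the paper: both arguments rest on the Cauchy--Kowalevski theorem applied to the first-order linear equation $\partial f/\partial t = zp(z)\,\partial f/\partial z$ in normal form. The only (immaterial) differences are that the paper first reduces to the initial condition $g(z)=z$ by composing the solution with $g$, and obtains analyticity by applying the real-analytic Cauchy--Kowalevski theorem on the real axis and then substituting $z$ for $x$, whereas you invoke the holomorphic version directly on a polydisc with general analytic datum $g$.
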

 \begin{proof} It suffices to prove the theorem for $g(z)=z$, since if $f_t$ is a
 solution with initial condition $f_0(z)=z$, then $g \circ f_t$ is a solution
 with initial condition $g(z)$.    Let $z=x+iy$.
   Let $u(x,t)=f(x,t)-x$.  The restriction of
 the Loewner partial
 differential equation to the real line $y=0$ is equivalent to the following Cauchy
 problem for complex-valued $u$:
 \begin{equation} \label{eq:u_problem}
  \frac{\partial u}{\partial t}(x,t) = xp(x) \frac{\partial
  u}{\partial x}(x,t) + xp(x) \ \ \ \ u(x,t)=0.
 \end{equation}
 The function $xp(x)$ is real analytic on some interval
 containing $0$.
 By \cite[Theorem 1.41]{Folland} there exists a real analytic solution to this
 problem on a set $(-a,a) \times (-T,T)$ for some $a>0$.
 Since each term in equation (\ref{eq:u_problem}) is real analytic, we can
 substitute $z$ for $x$ to obtain a holomorphic function $u(z,t)$ satisfying
 the equation
 \begin{equation*}
  \frac{\partial u}{\partial t}(z,t) = zp(z) \frac{\partial
  u}{\partial z}(z,t) + zp(z) \ \ \ \ u(z,t)=0
 \end{equation*}
 on $\{ z\,:\,|z|<a \} \times (-T,T)$.  Thus the function
 $f(z,t)=u(z,t)+z$ is a solution to the Loewner partial
 differential equation on $\{ z\,:\,|z|<a \} \times (-T,T)$.
 \end{proof}

 Theorem \ref{th:exp_analytic_smallt} leads immediately to a partial answer to Question 1.
 \begin{corollary} For any $\left<h\right> \in \mathfrak{m}_A(0)$, there is a $T>0$ such that $\exp{\left(t \left<h\right>\right)} \subset \mathcal{M}_A(0)$ for all $t \in [0,T)$.
 \end{corollary}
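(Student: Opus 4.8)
The plan is to read off the corollary from Theorem~\ref{th:exp_analytic_smallt} by passing from the holomorphic solution it produces to that solution's power matrix, and then identifying this power matrix with $\exp(t\left<h\right>)$ via the matrix form of the Loewner equation. Since $\left<h\right> \in \mathfrak{m}_A(0)$, the series $h \in \coneounion$ is holomorphic in a neighbourhood of $0$ and vanishes there, so I may write $h(z)=zp(z)$ with $p$ holomorphic near $0$. Applying Theorem~\ref{th:exp_analytic_smallt} with $g(z)=z$ furnishes a $T>0$, an open neighbourhood $U$ of $0$, and holomorphic functions $f_t$ on $U$ for $t\in(-T,T)$ solving $\frac{d}{dt}f_t(z)=zp(z)f_t'(z)$ with $f_0(z)=z$. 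Because each $f_t$ is holomorphic, its power matrix $[f_t]$ lies in $\mathcal{M}_A(0)$ by the definition of that set; the entire content of the corollary is therefore the assertion that $[f_t]=\exp(t\left<h\right>)$ on $[0,T)$.

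To make that identification I would first check that the family $f_t$ is a local solution in the sense of Definition~\ref{de:LoewnerPDE_solution_sense}: the Cauchy--Kowalevski solution produced in the proof of Theorem~\ref{th:exp_analytic_smallt} is jointly real-analytic, hence jointly continuous in $(t,z)$ and differentiable in $t$ for fixed $z$, so by Remark~\ref{re:joint_continuity} the Taylor coefficients of $f_t$ are differentiable in $t$ and $f_t$ satisfies the \emph{formal} Loewner partial differential equation. Since the generator $h$ here is constant in $t$, Proposition~\ref{pr:formal_matrix_Loewner_equal} then gives $\frac{d}{dt}[f_t]=[f_t]\left<h\right>$ with initial condition $[f_0]=[z]=I$. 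On the other hand, Proposition~\ref{pr:derivative_of_exp} shows that $\exp(t\left<h\right>)$ satisfies exactly this matrix equation with the same initial value. Restricting to any $(m,n)$ principal block, both families solve the \emph{same} finite linear system of ordinary differential equations with the same initial data (using Proposition~\ref{pr:multiplying_finite} and Corollary~\ref{co:exp_converges} to guarantee the block computations are genuinely finite), so by uniqueness they agree block by block, whence $[f_t]=\exp(t\left<h\right>)$ for every $t\in(-T,T)$. Combining this with the observation that $[f_t]\in\mathcal{M}_A(0)$ yields $\exp(t\left<h\right>)\in\mathcal{M}_A(0)$ for $t\in[0,T)$.

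The main obstacle, such as it is, is the regularity bookkeeping in the second step: one must be sure that the solution delivered by Folland's Cauchy--Kowalevski theorem is smooth enough in $t$ for the formal-Loewner equivalence of Proposition~\ref{pr:formal_matrix_Loewner_equal} to apply, i.e. that its $z$-Taylor coefficients are honestly $t$-differentiable. This is precisely what joint analyticity of the Cauchy--Kowalevski solution secures, so no new estimate is needed. I would also emphasize that the restriction to the finite interval $[0,T)$, rather than to all $t\ge 0$, is not an artifact of the argument but is intrinsic: Cauchy--Kowalevski guarantees only short-time holomorphic existence, which is exactly the local conclusion the corollary records and is consistent with the conjecture that $\exp\mathfrak{m}_A(0)\subset\mathcal{M}_A(0)$ may fail globally.
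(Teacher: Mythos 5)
Your proposal is correct and follows essentially the same route as the paper: invoke Theorem~\ref{th:exp_analytic_smallt} to get a holomorphic local solution with $f_0(z)=z$, pass to the formal Loewner equation, and identify $[f_t]$ with $\exp(t\left<h\right>)$ via Propositions~\ref{pr:formal_matrix_Loewner_equal} and \ref{pr:derivative_of_exp}. The paper's own proof is just a terser version of the same argument; your elaboration of the block-by-block uniqueness and the regularity bookkeeping fills in details the paper leaves implicit.
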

 \begin{proof}
  By the previous theorem, there is a local solution $f_t$ to the Loewner partial differential equation on $[0,T)$ with initial condition $f_0(z)=z$ and infinitesimal generator $p$.  $f_t$ is thus also a solution to the formal Loewner partial differential equation.  The claim then follows from Propositions \ref{pr:formal_matrix_Loewner_equal} and \ref{pr:derivative_of_exp}.
 \end{proof}

 The Cauchy-Kowalevski theorem also shows that Questions 4.1 and 4.2 are equivalent.  Clearly a positive answer to Question 4.2 implies a positive answer to Question 4.1.  Conversely, assume that the answer to Question 4.1 is yes.  Fix an analytic $h(z)=zp(z)$ in a neighbourhood of $0$.  Fixing $t_0$, $\exp{(t_0 \left<zp\right>)}$ is the power matrix of an analytic function $f_{t_0}$.  Reasoning as in the proof of Theorem \ref{th:exp_analytic_smallt}, there is a local solution $f_t$ of the Loewner partial differential equation on $(t_0-a,t_0+a) \times U$ for some $a$ and initial condition $f_{t_0}$.
 Since this is true for any $t_0 \in [0,\infty)$ the answer to Question 4.2
 is also yes.
\begin{subsection}{The case that $p \in \mathcal{P}$}
 In this Section we make some observations regarding the case that
 $p \in \mathcal{P}$, the normalized analytic functions on
 the disc of positive real part (\ref{eq:Pdefinition}).  This case is contained in the
 standard theory of analytic semigroups \cite{Shoikhet}.  We make
 a few observations to place the above results on the power
 matrix in this context.

 A  partial answer to Question 4.1 is easily obtained from Loewner theory, if we make the assumption that the infinitesimal generator is an element of $\mathcal{P}$.
 Specifically, if $p \in \mathcal{P}$ then the matrix
exponential $\mbox{exp}\left<zp\right>$ is the power matrix of a
one-to-one map of the unit disc $\mathbb{D}$ into itself. Thus the
exponential can be forced to be holomorphic and univalent on the
disc by restricting the infinitesimal generator.
 \begin{theorem} \label{th:exp_P_bounded_univalent}
  If $p \in \mathcal{P}$ then $\exp{-t\left<zp\right>}$ is the power matrix of a bounded
  univalent map $f_t$ for all $t \in [0,\infty)$.  Furthermore, $f_t \circ f_s = f_{t+s}$
  and $f_s(\mathbb{D}) \subset f_t(\mathbb{D})$ whenever $s <t$.
 \end{theorem}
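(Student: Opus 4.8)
The plan is to recognize this statement as the power-matrix translation of the classical theory of one-parameter semigroups of univalent self-maps of the disc, and to import the analytic content from that theory (as in \cite{Shoikhet}) while using the results of the preceding sections to pass to the power matrix. First I would record the classical input: since $p \in \mathcal{P}$, the field $-zp(z)$ is the infinitesimal generator of a one-parameter semigroup on $\mathbb{D}$, so the autonomous Loewner ordinary differential equation
\[ \frac{d}{dt} w_t(z) = -w_t(z)\, p(w_t(z)), \qquad w_0(z)=z, \]
has a solution $w_t$ defined for all $t \in [0,\infty)$, where each $w_t$ is a univalent holomorphic self-map of $\mathbb{D}$ fixing $0$ with $w_t'(0)=e^{-t}$. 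The same theory supplies the two structural facts needed downstream: the flow property $w_t \circ w_s = w_{t+s}$, and the subordination/nesting of images recorded in the discussion preceding Theorem \ref{th:LoewnerODEsolved_by_exp}.

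The second step is to identify the power matrix of $w_t$. Since $w_t$ is jointly continuous and holomorphic near $0$, its coefficients are differentiable in $t$, so $w_t$ solves the formal Loewner ordinary differential equation with constant infinitesimal generator $\tilde h = -zp$. By Proposition \ref{pr:formal_matrix_Loewner_equal} the power matrix $[w_t]$ then satisfies $\frac{d}{dt}[w_t] = -\left< zp \right>[w_t]$ with initial condition $[w_0]=I$. By Proposition \ref{pr:derivative_of_exp} the matrix $\exp(-t\left< zp \right>)$ solves the same matrix initial value problem, and on each principal $(m,n)$ block this is a finite linear system with a unique solution (exactly as in the proof of Theorem \ref{th:exp_is_power_matrix}); hence $[w_t] = \exp(-t\left< zp \right>)$, which one may equivalently quote from Theorem \ref{th:LoewnerODEsolved_by_exp}. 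Setting $f_t := w_t$ then exhibits $\exp(-t\left< zp \right>)$ as the power matrix of the $\mathbb{D}$-valued, hence bounded, univalent map $f_t$, which is the first assertion.

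For the semigroup property I would argue on the matrix side and transfer back. By Proposition \ref{pr:expth_is_subgroup},
\[ [f_t]\,[f_s] = \exp(-t\left< zp \right>)\exp(-s\left< zp \right>) = \exp(-(t+s)\left< zp \right>) = [f_{t+s}], \]
while the homomorphism property $[f_t\circ f_s]=[f_t][f_s]$ gives $[f_t\circ f_s]=[f_{t+s}]$; since a power series is recovered from the first row of its power matrix, the map $f\mapsto[f]$ is injective, and therefore $f_t\circ f_s = f_{t+s}$ (this is of course also immediate from uniqueness for the autonomous flow). The nesting of images asserted in the theorem is inherited directly from the corresponding subordination property of $w_t=f_t$ recorded in the first step.

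The genuinely substantive input is that first step: that for $p \in \mathcal{P}$ the autonomous Loewner equation produces a semigroup of \emph{univalent} self-maps of the disc with global existence and the stated nesting. This is exactly the content of the classical theory of analytic semigroups and Loewner chains \cite{Shoikhet,Pommerenke}, and is where $\mathrm{Re}(p)>0$ is essential; everything after it is a routine translation through the machinery of Sections 2 and 3. The one minor technical nuisance to handle is that $w_0=z$ has $w_0'(0)=1$, sitting at the boundary of the normalization $0<w'(0)<1$ used in Theorem \ref{th:LoewnerODEsolved_by_exp}; this is sidestepped either by the direct block-uniqueness argument above, or by performing the identification starting from a time $a>0$ where $w_a'(0)=e^{-a}<1$.
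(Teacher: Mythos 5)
Your proposal is correct and follows essentially the same route as the paper: invoke classical Loewner/semigroup theory to obtain the globally defined univalent self-maps $f_t$ of $\mathbb{D}$ solving the autonomous ordinary Loewner equation, pass to the formal and then matrix Loewner equation via Proposition \ref{pr:formal_matrix_Loewner_equal}, and identify $[f_t]$ with $\exp(-t\left<zp\right>)$ by uniqueness of solutions on each finite principal block, exactly as the paper does. Your explicit verification of the semigroup identity via Proposition \ref{pr:expth_is_subgroup} and injectivity of $f \mapsto [f]$ is a small elaboration of details the paper leaves to the cited classical theory, not a different argument.
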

 \begin{proof} Assume that $p \in \mathcal{P}$.  Let $f_t$ be the solution of the Loewner equation
 \[  \frac{d}{dt} f_t(z)= -f_t(z) p \circ f_t(z).  \]
 Such a solution is guaranteed to exist for all $t$ by standard Loewner theory \cite{Pommerenke}.

 In particular, $f_t$ satisfies the formal Loewner ordinary differential equation so
 by Proposition \ref{pr:formal_matrix_Loewner_equal} $[f_t]$ satisfies the matrix Loewner partial differential equation.
 On the other hand by Proposition \ref{pr:derivative_of_exp}, the coefficients of  $\exp{-t \left<zp\right>}$ satisfy the same differential equation and initial conditions.  On each principal $(m,n)$ block, for $m\leq 1$ and $n \geq 1$, only the coefficients of $\left<zp\right>$, $[f_t]$ and $\exp{-\left<zp\right>}$ in that block enter the equation.  By the uniqueness of solutions to ordinary differential equations, the coefficients of $[f_t]$ and $\exp{-\left<zp\right>}$ are identical.
 \end{proof}
 \begin{remark}
 For $p_t \in \mathcal{P}$
 measurable in $t$, $t \in [0,\infty)$, it was shown by Friedland and Schiffer that
 the differential equation
 \begin{equation}  \label{eq:Friedland_Schiffer}
  \dot{f}_t = -zp_t(z) f_t'(z)
 \end{equation}
 with initial condition $f_0(z)=z$ has a
 solution on $\mathbb{D}$ almost everywhere in $t$.  (More
 precisely, they prove this if $p_t$ are extreme points of
 $\mathcal{P}$, but the proof goes through in general.  A full
 proof can be found in \cite{Olithesis}.  Friedland and Schiffer
 also consider more general initial conditions.)

 For $p_t$ independent of $t$, it can be shown that the solution of the
 Friedland-Schiffer equation must also satisfy the Loewner
 ordinary differential equation in a neighbourhood of $0$ with the
 same infinitesimal generator $p$.   This is a natural consequence of the semigroup formalism
 \cite{Shoikhet}.  The solutions must thus be the same on
 the entire disc.  In particular, the power matrices of the solutions $f_t$ to the
 Friedland-Schiffer equation
 (\ref{eq:Friedland_Schiffer}) with initial condition $f_0(z)=z$ must also satisfy
 $[f_t]=\exp{-t\left<zp\right>}$.

 On the level of power matrices, the fact that the solutions to the Loewner ODE and
 the Friedland-Schiffer equation starting at the
 identity must be the same is an
 obvious consequence of Proposition \ref{pr:derivative_of_exp}.
 \end{remark}

It is not true that every univalent holomorphic map from
$\mathbb{D}$ into $\mathbb{D}$ is the exponential of an element of
$\mathcal{P}$, as the following example shows.
\begin{example}
 Let $f:\mathbb{D} \rightarrow \mathbb{D}$ satisfy the
 normalizations $f(0)=0$, $f'(0)=e^{-T}=[f]^1_1$. By Theorem \ref{th:exp_invertible} there's an
 $h\in \coneounion$ such that $[f]=\exp{(-\left<h\right>)}$.
 Choose $p(z)=1+c_1 z + \cdots$ such that $h(z) = e^{-T}z p(z)$.
 Assume that $p \in \mathcal{P}$.  By Theorem
 \ref{th:exp_P_bounded_univalent},
 \[  [f_t] = \exp{-t\left<zp\right>}  \]
 is a solution to the Loewner ordinary differential equation with $f_t:\mathbb{D}
 \rightarrow \mathbb{D}$ for each $t$.  Thus $f_T=f$ is
 reachable in time $T$ by a solution to the Loewner equation with
 {\it constant} generator $p_t \equiv p$.

 By a result of Kufarev
 for every $C^1$ function $\kappa(t): [0,\infty)
 \rightarrow \mathbb{C}$ satisfying $|\kappa(t)|=1$, choosing
 \[  p_t(z) = \frac{1+\kappa(t)}{1-\kappa(t)} \]
 the solution $f_t$ to the ordinary Loewner equation with initial
 condition $f_0(z)=z$ is a univalent map from $\mathbb{D}$ onto
 $\mathbb{D}$, minus a single slit extending to the boundary of the
 disc.  On the other hand, given such a single slit map $f$ with derivative $f'(0)=e^{-T}$,
  by the Riemann mapping theorem
 there is a unique normalized one-parameter family $f_t$ such that
 $f_T=f$, $f_t(0)=0$, $f_t'(0)=e^{-t}$ and $f_t(\mathbb{D}) \subset
 f_s(\mathbb{D})$ whenever $t \geq s$.  Since the solution of
 the Loewner equation satisfies this property, there is thus a
 {\it unique} choice of $\kappa(t)$ (and hence $p_t(z)$) so that
 the solution of the Loewner equation has this terminal point at
 time $T$.  Of course $\kappa(t)$ need not be constant.
 We have thus exhibited a large class of examples of
 bounded univalent mappings which are {\it not} reachable by a
 Loewner chain with constant $p_t \equiv p$.  Therefore if $f$ is one of these mappings, $[f]\neq
 \exp{-T\left<zp \right>}$ for any $p \in \mathcal{P}$.
\end{example}
\end{subsection}

Finally we remark on an essential asymmetry between the ``outward''
and ``inward'' flows of univalent maps of the disc. The condition
$\mbox{Re}(p)>0$ specifies that the solutions of either the Loewner
ordinary differential equation
\[  \frac{d}{dt} f_t(z) = - f_t(z) p \circ f_t(z) \]
or the Friedland-Schiffer equation
\[  \frac{d}{dt} f_t(z) = - z p(z) f_t'(z)  \]
with initial condition $f_0(z)=z$ are {\it inward} flows, that is
$f_t(\mathbb{D}) \subset f_s(\mathbb{D})$ whenever $s \leq t$. With
the same restriction on $p$, the equation
\[  \frac{d}{dt} f_t(z) = f_t(z) p \circ f_t(z) \]
or the Loewner partial differential equation
\begin{equation} \label{eq:actual_Loewner_PDE}
 \frac{d}{dt} f_t(z) = z p(z) f_t'(z)
\end{equation}
with initial condition $f_0=f$ for some normalized univalent
function $f$ on $\mathbb{D}$, generate {\it outward} flows,
provided that solutions exist. There seems to be an
essential asymmetry between the outward and inward case
\cite{OliEricLoewnerHadamard}, which we will briefly describe
here.

If one requires that the solution to equation
(\ref{eq:actual_Loewner_PDE}) be univalent, it is very easy to
construct examples of initial functions $f_0$ and $p \in
\mathcal{P}$ such that the solution does not stay univalent on any
interval $[0,T)$ (see \cite[Example 1]{OliEricLoewnerHadamard}).
Furthermore, the solution might not even be holomorphic on
$\mathbb{D}$ even for a short time.  For example, if
\[  p(z)= z \, \frac{1 + z}{1-z}  \]
then the local solution $f_t(z)$ to the Loewner partial
differential equation with initial condition $f_0(z)=z$ is
$f_t(z)=k_0^{-1} \circ k_t(z)$ where
\[  k_t(z)=e^t \frac{z}{(1-z)^2}.  \]
The function $k_t$ maps $\mathbb{D}$ onto $\mathbb{C} \backslash
(-\infty,e^t/4]$.  Thus $f_t$ is not analytic on $\mathbb{D}$ for
any $t >0$.
\end{subsection}
\end{section}

\end{document}